\newtheorem{theorem}{Theorem}
\newtheorem{proposition}[theorem]{Proposition}
\newtheorem{lemma}[theorem]{Lemma}
\newtheorem{conjecture}[theorem]{Conjecture}
\newtheorem{claim}[theorem]{Claim}
\newcommand{\eps}{\varepsilon}
\renewcommand{\theenumi}{{\rm(\roman{enumi})}}
\title{Colouring squares of claw-free graphs}
\author{
R\'emi de Joannis de Verclos
\and
 Ross J. Kang
 \and
 Lucas Pastor
}
\begin{document}
\maketitle

\begin{abstract}
Is there some absolute $\varepsilon > 0$ such that for any claw-free graph $G$, the chromatic number of the square of $G$ satisfies $\chi(G^2) \le (2-\varepsilon) \omega(G)^2$, where $\omega(G)$ is the clique number of $G$? Erd\H{o}s and Ne\v{s}et\v{r}il asked this question for the specific case of $G$ the line graph of a simple graph and this was answered in the affirmative by Molloy and Reed. We show that the answer to the more general question is also yes, and moreover that it essentially reduces to the original question of Erd\H{o}s and Ne\v{s}et\v{r}il.
\end{abstract}

\let\thefootnote\relax\footnotetext{
  AMS 2010 codes: 
  05C15 (primary), 
  05C35, 
  05C70 (secondary). 
}

\let\thefootnote\relax\footnotetext{
Keywords: graph colouring, Erd\H{o}s--Ne\v{s}et\v{r}il conjecture, claw-free graphs.
}


\section{Introduction}\label{sec:intro}

Let $G$ be a claw-free graph, that is, a graph that does not contain the complete bipartite graph $K_{1,3}$ as an induced subgraph. We consider the square $G^2$ of $G$, which is the graph formed from $G$ by the addition of edges between those pairs of vertices connected by some two-edge path in $G$, and consider proper colourings of $G^2$.
In particular, we relate the chromatic number $\chi(G^2)$ of $G^2$ to the clique number $\omega(G)$ of $G$.
Our main result is the following.

\begin{theorem}\label{thm:main}
There is an absolute constant $\eps > 0$ such that $\chi(G^2) \le (2-\eps)\omega(G)^2$ for any claw-free graph $G$. 
\end{theorem}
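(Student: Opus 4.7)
The plan is to reduce, via the structural theory of claw-free graphs, to the Molloy--Reed theorem on the strong chromatic index, which says that for every simple graph $H$, $\chi(L(H)^2) \le (2-\eps_0)\Delta(H)^2$ for some absolute $\eps_0 > 0$. Since $\omega(L(H)) \ge \Delta(H)$, the desired bound holds directly when $G$ is a line graph; the work is in handling general claw-free $G$.

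The starting point is the elementary observation that in a claw-free $G$, every neighbourhood $N(v)$ has independence number at most $2$, so $N(v)$ is covered by a bounded number of cliques. When $N(v)$ is the union of exactly two cliques for every $v$ (the quasi-line case), I would invoke the Chudnovsky--Seymour description: such $G$ is either a fuzzy circular interval graph --- for which $\chi(G^2)$ is easily bounded by the interval structure, well below $2\omega^2$ --- or it is built from ``strips'' that are essentially line graphs, glued along cliques. In the latter case, I would assemble a colouring of $G^2$ by applying (a list version of) Molloy--Reed to each strip and propagating colours consistently along the shared cliques.

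For the general case, where some $N(v)$ cannot be covered by two cliques, there exist two non-adjacent $a, b \in N(v)$ together with additional non-line-graph-like local structure. I would use this structure, combined with Ramsey-type bounds forced by $\alpha(N(v)) \le 2$, to argue that $|N(v) \cup N^2(v)| \le (2-\eps_1)\omega(G)^2$ for some $\eps_1 > 0$ at all such vertices. Combined with Brooks' theorem applied to $G^2$, this would give the bound vertex by vertex, with $\eps = \min(\eps_0, \eps_1)$ after cleaning up the case split.

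The main obstacle I anticipate is the patching step in the quasi-line case: naively combining Molloy--Reed colourings across a clique cutset multiplies the palette, which loses the constant factor we fought to gain. Overcoming this likely requires either a genuine list-colouring version of Molloy--Reed, or a construction embedding the strip composition into a single auxiliary simple graph $H^*$ whose line graph contains $G^2$ with $\Delta(H^*)$ controlled by $\omega(G)$. Carrying this out without degrading $\eps_0$ is the delicate point, and it is what would justify the abstract's claim that the problem ``essentially reduces'' to the Erd\H{o}s--Ne\v{s}et\v{r}il question.
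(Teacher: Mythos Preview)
Your outline is on the right track structurally, but your handling of the quasi-line case --- the step you correctly flag as the main obstacle --- is not what the paper does, and neither of your proposed fixes (a list-colouring version of Molloy--Reed, or an auxiliary simple $H^*$) is used or known to work. The paper avoids patching colourings across strips altogether. It instead proves a degeneracy statement (Theorem~\ref{thm:quasiline}): any quasi-line $G$ that is not already the line graph of a multigraph contains a vertex $v$ with $\deg_{G^2}(v)\le\omega(G)^2+\omega(G)$, together with enough control over $N_G(v)$ that a proper colouring of $(G\setminus v)^2$ can be extended to $v$. Iterating this peels $G$ down to the line graph of a \emph{multigraph}, where a multigraph version of the Molloy--Reed sparsity argument (Lemma~\ref{lem:multisparse}, Theorem~\ref{thm:linemulti}) finishes. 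Note that the terminus is multigraphs, not simple graphs; Molloy--Reed is re-proved directly in that setting rather than via any embedding into a simple $H^*$.

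There is also a subtlety your ``Brooks on $G^2$'' step glosses over, and it bites in both the quasi-line and the non-quasi-line reductions. Bounding $\deg_{G^2}(v)$ at a single vertex is not enough for the induction: a proper colouring of $(G\setminus v)^2$ is not automatically a partial proper colouring of $G^2$, since two neighbours of $v$ that are far apart in $G\setminus v$ become adjacent in $G^2$ through $v$ and may have received the same colour. The paper therefore proves in addition that $N_G(v)$ is a clique in $(G\setminus v)^2$ (Theorem~\ref{thm:clawfree}), or, in the quasi-line case, that it becomes one after stripping off a set $S\subseteq N_G(v)$ of vertices that themselves have small square degree and can be recoloured first. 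Without this extra structural conclusion the greedy extension step fails; your sketch does not address it.
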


\noindent
This extends a classic result of Molloy and Reed~\cite{MoRe97}. Their work is an acclaimed combination of structural and probabilistic methodology that established the special case for Theorem~\ref{thm:main} of $G$ the line graph $L(F)$ of some (simple) graph $F$. Note that $\omega(G) = \Delta(F)$ here (unless $F$ is a disjoint union of paths, cycles and at least one triangle), where $\Delta(F)$ denotes the maximum degree of $F$.

Claw-free graphs constitute an important superclass of the class of line graphs. As such, there have been sustained efforts in combinatorial optimisation to extend results from the smaller to the larger class, especially for stable sets (which are matchings in an underlying graph of the given line graph), the starting point being the seminal work of Edmonds~\cite{Edm65}, cf.~e.g.~\cite{Min80,Sbi80,NaTa01,EOSV08,FOGS14}.
Salient to our work, we point out that significant efforts have also been made for proper colourings (which are proper edge-colourings in an underlying graph of the given line graph), the starting point being the classic Gupta--Vizing theorem~\cite{Gup66,Viz64}, cf.~e.g.~\cite{Sum81,Kie89,ChOv07,ChSe10,KiRe13,KiRe15}.

Along similar lines, our starting point is a notorious problem from the 1980s on strong edge-colourings due to Erd\H{o}s and Ne\v{s}et\v{r}il, cf.~\cite{Erd88}. Having in mind the line graph of a five-cycle each of whose vertices has been substituted with a stable set, they conjectured the following assertion restricted to $G$ a line graph.

\begin{conjecture}\label{conj:main}
For any claw-free graph $G$, 
\begin{align*}
\chi(G^2) \le 
\begin{cases} \frac54\omega(G)^2 &\text{if $\omega(G)$ even}, \\ 
\frac14(5\omega(G)^2 -2\omega(G)+1) & \mbox{otherwise.} \end{cases} 
\end{align*}
\end{conjecture}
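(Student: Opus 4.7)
The plan is to combine a structural reduction on claw-free graphs with a tight probabilistic colouring argument for line graphs. First, I would reduce Conjecture~\ref{conj:main} to its restriction to line graphs. Using a structural description of claw-free graphs (in the spirit of Chudnovsky and Seymour), every connected claw-free $G$ is either a fuzzy circular interval graph, a line graph of a multigraph, or built from such ``strips'' by operations along small clique cutsets, with few exceptional pieces. For the non-line-graph pieces the local twin structure inside maximal cliques forces many vertices at distance two to share a colour, so $\chi(G^2)$ sits strictly below $\frac{5}{4}\omega(G)^2$; and one pastes colourings across clique cutsets by the usual greedy argument, since such a cutset has at most $\omega(G)$ vertices and each is given $\omega(G)$ colours in the square. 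The outcome of this first step is that it suffices to prove Conjecture~\ref{conj:main} when $G = L(F)$ is a line graph, which is precisely the original Erd\H{o}s--Ne\v{s}et\v{r}il strong edge-colouring conjecture.

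Second, for $G = L(F)$ I would follow the probabilistic template of Molloy and Reed~\cite{MoRe97}, aiming to sharpen it. The idea is to split the edges of $F$ into ``typical'' edges, around which the square neighbourhood in $L(F)$ is far from a blown-up $C_5$, and ``atypical'' edges that are rare. Typical edges admit a sparse-neighbourhood colouring via the Lov\'asz local lemma together with a Reed-type bound relating $\chi$, $\omega$ and maximum degree in $L(F)^2$; the $\frac{5}{4}$ constant would emerge from the fact that any vertex in $L(F)^2$ with nearly $\Delta(F)^2$ distinct neighbours at distance two must sit inside a near-copy of the $C_5$-blow-up, and such local structure is incompatible with being typical. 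The atypical edges are then coloured greedily from the leftover palette, and the odd-$\omega$ refinement is obtained by a separate parity analysis of the extremal $C_5$-blow-up.

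The main obstacle lies entirely in this second step, and it is decisive. The original Erd\H{o}s--Ne\v{s}et\v{r}il conjecture has resisted all attempts for more than three decades: the best known asymptotic bound for line graphs remains of the form $(2-\eps)\Delta(F)^2$, not the conjectured $\frac{5}{4}\Delta(F)^2$, and no existing technique appears to isolate the correct leading constant, still less the precise odd/even expression. Any complete proof of Conjecture~\ref{conj:main} would therefore subsume the original Erd\H{o}s--Ne\v{s}et\v{r}il conjecture. The purpose of the first step is precisely to show that this is the \emph{only} obstacle -- and making that reduction rigorous in the asymptotic regime is already the content of Theorem~\ref{thm:main}.
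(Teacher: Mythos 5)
The statement you are asked about is Conjecture~\ref{conj:main}, and the paper does not prove it --- it is introduced explicitly as an open strengthening of the Erd\H{o}s--Ne\v{s}et\v{r}il conjecture, with Theorem~\ref{thm:main} offered only as evidence. Your proposal is likewise not a proof, and to your credit you say so: your ``second step'' is exactly the original Erd\H{o}s--Ne\v{s}et\v{r}il conjecture for (multi)graph line graphs, which remains open. The sketch you give for that step cannot work as described: the Molloy--Reed sparse-neighbourhood machinery (Lemma~\ref{lem:MoRe} here) is known to be incapable of reaching the constant $\tfrac54$ even for simple line graphs --- the paper states explicitly that this route cannot even achieve the optimal $\eps = 3/4$ in Theorem~\ref{thm:strongmulti} --- and no ``stability'' argument of the form ``near-extremal square degree forces a near-$C_5$-blow-up, which is incompatible with typicality'' is currently known. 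So the decisive gap is not a technicality; it is the entire content of a thirty-year-old open problem.

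Your first step, the reduction to line graphs, is essentially what the paper actually establishes, though your version is vaguer and slightly off in its quantitative claims. The paper's Theorems~\ref{thm:clawfree} and~\ref{thm:quasiline} produce, in any claw-free non-line-graph, a vertex (or a vertex plus a set $S$) of square degree at most $\omega(G)^2+\omega(G)$ whose neighbourhood can be made rainbow, and the greedy argument then reduces Conjecture~\ref{conj:main} to the line-graph case --- but only for $\omega(G)\ge 6$, since $\omega^2+\omega+1$ exceeds the conjectured bound for $\omega=5$. Your sketch glosses over this: the cases $\omega(G)<6$ do not follow from the reduction and must be treated separately (the paper cites follow-up work settling only $\omega=3$). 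Also, your appeal to ``clique cutsets'' and twin structure is not how the paper's reduction works; it goes through homogeneous pairs of cliques, circular interval graphs, and compositions of linear interval strips, with explicit square-degree bounds in each case. In short: step one is recoverable from the paper (for $\omega\ge 6$), step two is the open problem itself, and the proposal therefore does not constitute a proof of the conjecture.
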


\noindent
The conjecture of Erd\H{o}s and Ne\v{s}et\v{r}il remains open in general. Theorem~\ref{thm:main} provides evidence towards the stronger conjecture we have cheekily just introduced.

Several sharp claw-free graph results for proper colourings have been established; however, while important results on stable sets have extended quite well from line graphs to claw-free graphs (albeit thanks to serious, continuing work spanning decades), strictly the same cannot be said for proper colourings. To illustrate, the classic result on edge-colouring due to Vizing~\cite{Viz64} and, independently, Gupta~\cite{Gup66}, implies that $\chi(G) \in \{\omega(G), \omega(G)+1\}$ for $G$ a line graph.
On the other hand, by considering large triangle-free graphs without large stable sets~\cite{Kim95,AKS80}, one sees (cf.~\cite{ChSe10}) that $\sup\{\chi(G) \ | \ \text{$G$ claw-free, $\omega(G) = \omega$}\} = \Omega(\omega^2/(\log \omega)^2)$ as $\omega \to \infty$. So any overall upper bound on $\chi(G)$ in terms of $\omega(G)$ must be worse for $G$ a claw-free graph than for $G$ a line graph.
One might argue that this is intuitive from the fact that proper colourings (proper edge-colourings) are more complicated combinatorial structures than stable sets (matchings).

To our surprise, for proper colourings of the square (which are yet more complicated combinatorial structures), proceeding from line graphs to claw-free graphs, the situation does not worsen in the same sense. Rather, within the class of claw-free graphs the worst cases are likely to be line graphs. This motivates the direct extrapolation of the conjecture of Erd\H{o}s and Ne\v{s}et\v{r}il to Conjecture~\ref{conj:main}.
Let us make these vague sentiments more precise.

We prove the following three results.
Recall that a graph is a quasi-line graph if every neighbourhood induces a subgraph that can be covered by two cliques.

\begin{theorem}\label{thm:clawfree}
  For any claw-free graph $G$, either $G$ is a quasi-line graph or there is a vertex $v$ with square degree $\deg_{G^2}(v) \le \omega(G)^2 + (\omega(G)+1)/2$
  whose neighbourhood $N_G(v)$ induces a clique in $(G \setminus v)^2$.
\end{theorem}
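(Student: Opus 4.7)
The plan is to pick $v$ to be a vertex of $G$ whose neighbourhood is not coverable by two cliques; such $v$ exists because $G$ is not a quasi-line graph. Claw-freeness forces $\alpha(G[N_G(v)])\le 2$, so $\overline{G[N_G(v)]}$ is triangle-free, and being non-bipartite it contains an induced odd cycle $u_1u_2\cdots u_{2k+1}$ of some length $2k+1\ge 5$. In $G$, the pairs $u_iu_{i+1}$ (indices mod $2k+1$) are non-edges while all other $u_iu_j$ are edges.

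For the clique condition, associate to each $z\in N_G(v)$ its \emph{type} $T(z):=\{i:z\not\sim_G u_i\}$. Two consecutive indices in $T(z)$ would make $\{z,u_i,u_{i+1}\}$ a claw at $v$, so $T(z)$ is independent in the cycle $u_1\cdots u_{2k+1}$ and $|T(z)|\le k$. For non-adjacent $x,y\in N_G(v)$, a shared index $i\in T(x)\cap T(y)$ would similarly yield a claw at $v$ on $\{x,y,u_i\}$; hence $T(x)\cap T(y)=\emptyset$ and $|T(x)\cup T(y)|\le 2k<2k+1$, so some $u_i\ne v$ is a common neighbour of $x$ and $y$ in $G\setminus v$.

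For the square-degree bound I rely on two structural observations. (a)~For each $u\in N_G(v)$, the set $N_G(u)\setminus N_G[v]$ is a clique of size at most $\omega(G)-1$: otherwise two non-adjacent vertices of it, together with $v$, form a claw at $u$. (b)~For every $w\in N_2(v)$ there exists $u_j\in\{u_1,\ldots,u_{2k+1}\}$ with $w\in N_G(u_j)\setminus N_G[v]$: fixing $u\in N_G(v)$ with $w\sim u$, the set $N_G(u)\cap\{u_1,\ldots,u_{2k+1}\}$ has at least $k+1\ge 3$ elements and so contains a non-edge $u_ju_{j'}$ of $G$, and claw-avoidance at $u$ on $\{v,u_j,u_{j'},w\}$ then forces $w\sim u_j$ or $w\sim u_{j'}$. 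Re-applying the claw argument at the $u_j$ obtained in (b): the set $N_G(u_j)\cap\{u_1,\ldots,u_{2k+1}\}$ has $2k-2$ elements and its $G$-non-edges form a path (the two consecutive runs $u_{j-2},u_{j-3},\ldots,u_{j-k}$ and $u_{j+2},u_{j+3},\ldots,u_{j+k}$ joined by the non-edge $u_{j-k}u_{j+k}$), in which a maximum independent set has size $k-1$; consequently $w$ is adjacent to at least $k-1$ of the $u_i$'s in $N_G(u_j)$, hence to at least $k$ of the $u_1,\ldots,u_{2k+1}$ in total. This $k$-fold coverage combined with (a) gives $|N_2(v)|\le(2k+1)(\omega(G)-1)/k$, and together with the Ramsey bound $|N_G(v)|\le R(3,\omega(G))-1\le\binom{\omega(G)+1}{2}-1$ (from $\alpha(G[N_G(v)])\le 2$ and $\omega(G[N_G(v)])\le\omega(G)-1$) a short computation yields the required bound $\omega(G)^2+(\omega(G)+1)/2$.

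The main obstacle I expect is verifying the path structure in the iterated claw argument for all $k\ge 2$ and closing the arithmetic: the case $k=2$ is already clean since the whole $N_G(v)$ is already quite restricted, so the work is mainly to confirm that the $k$-fold overcount persists when the odd cycle in $\overline{G[N_G(v)]}$ is longer than~$5$.
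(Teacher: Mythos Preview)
Your argument is correct (modulo one cosmetic fix: when $z$ coincides with some $u_m$, exclude the index $m$ from $T(z)$, otherwise $T(u_m)=\{m-1,m,m+1\}$ contains consecutive indices; the rest of the argument is unaffected), and it follows a genuinely different route from the paper. Both proofs single out a vertex $v$ whose neighbourhood is not coverable by two cliques, but they diverge thereafter. For the clique condition in $(G\setminus v)^2$, the paper argues directly that if some $x,y\in N_G(v)$ were at distance $\ge 3$ in $G\setminus v$, then $N_G(v)$ would split into the two cliques $N_G(v)\cap N_G(x)$ and $N_G(v)\cap N_G(y)$; your type argument instead exploits the odd anti-cycle to exhibit a common neighbour among the $u_i$. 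For the square-degree bound, the paper relies on an external structural result of Chudnovsky and Seymour giving $\deg_G(v)\le 4(\omega-1)$ whenever $G$ contains a stable triple, and then analyses the minimum number $k$ of parents in $N_G(v)$ over all vertices of $N^2_G(v)$: the case $k=1$ forces two-clique coverability, while $k\ge 2$ yields $|N^2_G(v)|\le \deg_G(v)(\omega-1)/k$. You dispense with the Chudnovsky--Seymour input entirely: iterated claw-avoidance at a cycle vertex $u_j$ shows every $w\in N_2(v)$ is adjacent to at least $k$ of the $u_i$, whence $|N_2(v)|\le (2k+1)(\omega-1)/k\le \tfrac52(\omega-1)$, and the Ramsey bound $|N_G(v)|\le \binom{\omega+1}{2}-1$ then closes the arithmetic with the sharper estimate $\deg_{G^2}(v)\le \tfrac12(\omega^2+6\omega-7)$. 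Your approach is thus more elementary and self-contained; the paper's, on the other hand, establishes the dichotomy (small square degree versus two-clique-coverable neighbourhood) for \emph{every} vertex rather than only for the chosen one.
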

\begin{theorem}\label{thm:quasiline}
  For any quasi-line graph $G$, either $G$ is the line graph of a multigraph
  or there is a vertex $v$ and a set $S \subseteq N_G(v)$
  such that every vertex $u \in S \cup \{ v \}$ has square degree
  $\deg_{G^2}(u) \le \omega(G)^2+\omega(G)$ and
  $N_G(v) \setminus S$ induces a clique in $(G \setminus v)^2$.
\end{theorem}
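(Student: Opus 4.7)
\medskip

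\noindent
My plan is to invoke the Chudnovsky--Seymour structural decomposition theorem for quasi-line graphs, which states that every connected quasi-line graph is either a fuzzy circular interval graph or else is obtained from a multigraph $M$ by replacing each edge of $M$ with a \emph{linear interval strip}, i.e., a fuzzy linear interval graph with two distinguished endpoints glued to the endpoints of the underlying edge. If every strip in such a decomposition is trivial (a single edge with no interior vertex), then $G$ is itself the line graph of a multigraph and the first alternative of the theorem holds. So I may assume that $G$ is not a line graph of a multigraph, and hence either $G$ is a fuzzy circular interval graph with nontrivial arc structure, or $G$'s strip decomposition contains at least one nontrivial strip; I would handle these two cases in parallel.

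\medskip

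\noindent
In the nontrivial strip case, fix a strip $(H, \{s_1, s_2\})$ with an interior vertex. Since vertices of $H$ lie on a line and each $H$-neighborhood is an interval (with controlled fuzziness at its endpoints), I would pick $v$ to be an interior vertex of $H$ adjacent to $s_1$, and set $S$ to be the neighbors of $v$ lying strictly in the strip interior, so that $N_G(v) \setminus S$ lies in the boundary clique at $s_1$. By linearity, the second neighborhood in $G$ of any vertex in $\{v\} \cup S$ stays within a bounded window of the strip together with the fixed boundary clique at $s_1$, and a direct local count yields a square-degree bound of at most $\omega(G)^2 + \omega(G)$. Moreover $N_G(v) \setminus S$ sits inside a single clique of $G \setminus v$ (the boundary clique at $s_1$ minus $v$), and so certainly induces a clique in $(G \setminus v)^2$. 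The fuzzy circular interval case is analogous: since $G$ is not a line graph, two overlapping arcs witness genuinely quasi-line (rather than line) behaviour at some vertex, and I would take $v$ near the overlap boundary, set $S$ on the ``interior'' side, and let $N_G(v) \setminus S$ be the arc-side that lies within a single clique.

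\medskip

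\noindent
The main obstacle I anticipate is the careful handling of the fuzzy edges at strip and arc boundaries. These edges can, in the worst configuration, create extra second-neighbours and threaten the tight bound $\omega(G)^2 + \omega(G)$. The proof will need a local count that uses precisely the hypothesis that $G$ is not a line graph of a multigraph to ensure a nontrivial strip or arc is available, with the excess absorbed into the $+\omega(G)$ slack. A secondary difficulty is selecting $v$ when several nontrivial strips are present: one must pick a strip and an interior vertex whose boundary-clique side truly yields a clique in $(G \setminus v)^2$, which likely requires placing $v$ at the ``thinnest'' end of the strip rather than at an arbitrary interior vertex.
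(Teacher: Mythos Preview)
Your high-level approach---invoke the Chudnovsky--Seymour structure theorem and then locate a vertex of small square degree inside a nontrivial strip or arc---matches the paper's in spirit, but the proposal has a genuine gap precisely where you flag it: the ``fuzzy'' edges. You have chosen the formulation of the structure theorem in which homogeneous pairs are absorbed into the fuzziness of the interval representations, and then you defer handling that fuzziness as the ``main obstacle I anticipate''. But this is not a peripheral technicality; it is the heart of the argument. The paper instead uses the three-case formulation (homogeneous pair of cliques / circular interval graph / composition of linear interval strips), and the bulk of the work goes into the homogeneous-pair case: one takes $v=a_1$ in the pair $(A,B)$, sets $S=(A\cup B)\cap N(a_1)$, and carries out an explicit count showing every vertex of $A\cup B$ has square degree at most $\omega^2+\omega$, using that $B\cup(B'\setminus A')$ and $A\cup(A'\setminus B')$ are cliques and that $A'\cap B'$ has no edge to the exterior. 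None of this is visible in your plan, and ``a direct local count'' does not substitute for it.

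Two further concrete issues. First, your assertion that for an interior strip vertex adjacent to $s_1$ the square degree is at most $\omega^2+\omega$ is not justified; the paper needs two separate lemmas (square degree at most $4\omega-4$ in a circular interval graph, and at most $3\omega-3$ for a vertex adjacent to an end of a linear interval strip) together with a bound of $(\omega-1)^2$ on the contribution of the boundary clique via Lemma~\ref{lem:cliquesecond}, and even then must distinguish whether the chosen vertex sees one or both strip ends. Second, your choice of $S$ as ``the neighbours of $v$ lying strictly in the strip interior'' does not quite work when $v$ is adjacent to both ends of a short strip; the paper handles this by a further subcase split (your case is roughly its case~(iii)(b), but case~(iii)(a) and~(iii)(c) need separate treatment). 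In short: the skeleton is right, but the missing homogeneous-pair argument is not a detail you can hope to fill in without changing your choice of structure theorem, and the strip-side degree bounds require more than a hand-wave.
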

\begin{theorem}\label{thm:linemulti}
Theorem~\ref{thm:main} holds for $G$ being the line graph of a multigraph.
\end{theorem}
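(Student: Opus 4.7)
The plan is to adapt the probabilistic argument of Molloy and Reed~\cite{MoRe97} from simple graphs to multigraphs. Let $F$ be a multigraph, set $G := L(F)$, and write $\Delta := \Delta(F)$. One may assume $\Delta$ is at least an absolute constant, since for bounded $\Delta$ the trivial greedy bound $\chi(G^2) \le 2\Delta^2 - 2\Delta + 1$ already yields $\chi(G^2) \le (2-\eps)\omega(G)^2$ for small enough $\eps$; and in the large-$\Delta$ regime $\omega(G) = \Delta$, so the target inequality becomes $\chi(G^2) \le (2-\eps)\Delta^2$.

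The heart of the Molloy--Reed proof is that, for every vertex $e = uv$ of $G^2$, the neighbourhood $N_{G^2}(e)$, of size at most $2\Delta^2 - 2\Delta$, is covered by two overlapping ``star-clique'' families: the edges incident to $u$ (a clique of $G$), together with their extensions through neighbours of $u$, and likewise for $v$. This clique-cover structure is precisely what enables a constant-factor saving below $2\Delta^2$. The first step is to check that the same structural description persists when $F$ is a multigraph: any parallel copies of the bundle containing $e$ sit inside both star-cliques at $u$ and at $v$, and parallel copies of any other bundle lie within a single star-clique; hence the clique cover of $N_{G^2}(e)$ is no less efficient than in the simple-graph case.

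The second step is to execute the Molloy--Reed iterative ``nibble''-and-Local-Lemma procedure on $G^2$. In each round, assign each uncoloured vertex a uniformly random tentative colour from a palette of size $K := (2-\eps)\Delta^2$, retain only the non-conflicting assignments, and iterate until a sparse residual list-colouring problem remains that can be finished off directly by the Local Lemma. Concentration (via Talagrand's or McDiarmid's inequality) of the number of surviving available colours at each vertex, combined with the clique overlap identified in the previous step, ensures that the ratio of effective neighbourhood size to remaining palette size stays bounded strictly below $1$ throughout the iteration.

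The main obstacle is the technical verification that the Lipschitz/certificate conditions underlying Talagrand-type concentration still apply when $F$ is a multigraph: parallel edges introduce extra short correlations between the random choices at neighbouring vertices of $G^2$. However, these correlations are supported entirely inside the end-star cliques at $u$ and at $v$, so the certificate-size bound used by Molloy and Reed is affected by at most a constant factor. This preserves the same qualitative concentration and yields an absolute (though possibly unoptimised) $\eps > 0$ for which the bound holds.
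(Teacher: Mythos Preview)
Your proposal misidentifies where the work lies. The Molloy--Reed machinery you invoke is the general sparsity lemma (Lemma~\ref{lem:MoRe} in the paper): if every neighbourhood in a graph $H$ with $\Delta(H)\le D$ spans at most $(1-\eps)\binom{D}{2}$ edges, then $\chi(H)\le(1-\delta)D$. This lemma is a black box applying to \emph{any} graph $H$; once you feed it $H=L(F)^2$ with $D=2\Delta(\Delta-1)$, nothing about multiplicities, correlations, or certificate sizes needs to be re-verified. So your ``main obstacle'' in the third step is a non-issue.

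The actual content of the theorem is the deterministic combinatorial fact that every neighbourhood $N_{L(F)^2}(e)$ spans at most $(1-\eps)\binom{2\Delta(\Delta-1)}{2}$ edges of $L(F)^2$, and this is what the paper proves (Lemma~\ref{lem:multisparse}). Your ``clique-cover'' observation --- that $N_{G^2}(e)$ splits into a $u$-side and a $v$-side --- only recovers the trivial bound $\deg_{G^2}(e)\le 2\Delta(\Delta-1)$; it does not by itself produce any non-edges inside $N_{G^2}(e)$, and the overlap between the two sides can be as small as $O(\Delta)$. The saving comes from a genuine dichotomy: either $\deg_{G^2}(e)$ is already a constant factor below $2\Delta^2$, or there are many closed four-edge walks through $e$, each of which witnesses a non-edge of $L(F)^2$ inside $N_{L(F)^2}(e)$. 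Establishing this for multigraphs requires the three-case counting argument (tracking edges within $A\cup B$, paths to $C$, parallel edges, and applying Jensen) that the paper carries out; your proposal contains no analogue of this step, and without it the nibble procedure has no reason to terminate with fewer than $2\Delta^2$ colours.
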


By a simple greedy procedure that always colours the vertex with least square degree, Theorem~\ref{thm:main} follows from the above three results. We spell out this procedure in Section~\ref{sec:main}.
Moreover, by the same greedy approach Theorems~\ref{thm:clawfree} and~\ref{thm:quasiline} together imply that Conjecture~\ref{conj:main} is established if it can be shown for all claw-free graphs $G$ with $\omega(G)<6$ as well as for all multigraph line graphs $G$ with $\omega(G)\ge 6$.
The proofs of Theorems~\ref{thm:clawfree} and~\ref{thm:quasiline} rely on a good structural understanding of claw-free and quasi-line graphs~\cite{ChSe05,ChSe10,ChSe12}, while the proof of Theorem~\ref{thm:linemulti} relies on a probabilistic colouring result, following~\cite{MoRe97}.

Clearly our belief is that expanding the scope beyond line graphs will not lead to claw-free examples having larger square chromatic number compared to clique number. 
This is bolstered by
the following result for which the line graphs of (simple) blown-up five-cycles are the unique extremal examples.

\begin{theorem}\label{thm:diametertwo}
For any claw-free graph $G=(V,E)$ with $\omega(G) \ge 6$, if $\omega(G^2)=|V|$, then 
\begin{align*}
|V| \le
\begin{cases} \frac54\omega(G)^2 &\text{if $\omega(G)$ even}, \\ 
\frac14(5\omega(G)^2 -2\omega(G)+1) & \mbox{otherwise.} \end{cases} 
\end{align*}
\end{theorem}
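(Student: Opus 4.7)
The plan is a two-stage argument: first, use Theorems~\ref{thm:clawfree} and~\ref{thm:quasiline} to reduce to the case where $G$ is the line graph of a multigraph $H$, and then carry out a structural analysis of $H$ under the resulting diameter-two constraint on $L(H)$.

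For the reduction, since $\omega(G^2) = |V|$, every vertex satisfies $\deg_{G^2}(v) = |V|-1$. If $G$ is not quasi-line, Theorem~\ref{thm:clawfree} forces $|V| \le \omega^2 + (\omega+3)/2$; if $G$ is quasi-line but not a multigraph line graph, Theorem~\ref{thm:quasiline} forces $|V| \le \omega^2 + \omega + 1$. A direct check shows that both estimates fall strictly below $\frac{5\omega^2}{4}$ and its odd variant whenever $\omega \ge 6$, so we may assume $G = L(H)$ for some multigraph $H$, with $|V(G)| = |E(H)|$, $\Delta(H) \le \omega(G)$, and $L(H)$ of diameter at most two.

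I would then translate the diameter-two condition to: any two edges of $H$ either share an endpoint, or admit a third edge of $H$ incident to both. An immediate consequence is that the underlying simple graph of $H$ is $P_5$-free, since an induced path $v_0 v_1 v_2 v_3 v_4$ would present disjoint edges $v_0v_1$ and $v_3v_4$ with no possible connecting edge $v_iv_j$ for $|i-j|\ge 2$. The structural heart of the proof, and its principal obstacle, is the following classification: \emph{a connected graph that is triangle-free and $P_5$-free and whose line graph has diameter at most two must be either a complete bipartite graph $K_{a,b}$ or a blown-up five-cycle $C_5[k_1,\ldots,k_5]$}. Handling triangles and parallel edges would require a short, separate degree-counting argument showing that any such feature strictly reduces $|E(H)|$ below the claimed bound when $\omega \ge 6$, reducing us to the triangle-free simple case covered by the classification.

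Given the classification, the extremization is routine. The complete bipartite case yields $|E| \le \Delta^2 < \frac{5\Delta^2}{4}$. In $C_5[k_1,\ldots,k_5]$, we have $\Delta = \max_i (k_{i-1}+k_{i+1})$ and $|E| = \sum_i k_ik_{i+1}$ (indices modulo $5$); maximizing $\sum_i k_i k_{i+1}$ subject to $k_{i-1}+k_{i+1} \le \Delta$ yields $|E| \le \frac{5\Delta^2}{4}$ for even $\Delta$, attained uniquely (up to rotation and reflection) by $k_i \equiv \Delta/2$, and $|E| \le \frac{5\Delta^2-2\Delta+1}{4}$ for odd $\Delta$, attained by distributions such as $((\Delta+1)/2, (\Delta-1)/2, (\Delta-1)/2, (\Delta-1)/2, (\Delta+1)/2)$. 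This simultaneously yields the claimed bound and identifies the line graphs of blown-up five-cycles as the unique extremal examples.
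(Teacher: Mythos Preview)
Your reduction via Theorems~\ref{thm:clawfree} and~\ref{thm:quasiline} is exactly what the paper does, and your numerical checks are right. The divergence comes in how you handle the multigraph $H$, and there the proposal has two problems, one minor and one substantial.

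The minor one: your classification of the triangle-free case is false as stated. A connected bipartite graph that is $2K_2$-free (equivalently, whose line graph has diameter at most~$2$) need not be complete bipartite --- $P_4$, or any double star, is a counterexample. What \emph{is} true (and is Theorem~2 of Chung--Gy\'arf\'as--Tuza--Trotter) is that a connected, triangle-free, $2K_2$-free, \emph{non-bipartite} graph is a blown-up $C_5$; in the bipartite case one instead argues directly that some vertex sees an entire side, whence $|E| \le \Delta^2$. This is easily repaired and does not affect the bound.

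The substantial gap is your claim that ``handling triangles and parallel edges would require a short, separate degree-counting argument.'' It is not short. The paper, following~\cite{CGTT90}, splits according to $\omega(H)\in\{3,4\}$ and $\omega(H)\ge 5$ and treats each case separately; the case $\omega(H)=3$ alone rests on a chain of seven structural claims about how the vertices outside a maximum triangle attach to it and to each other, and the multigraph extension requires redefining the edge weights and reverifying each claim. The $\omega(H)=4$ case needs a further refinement beyond the generic estimate. None of this is a routine degree count, and there is no evident way to absorb the presence of a triangle into a single inequality that drops $|E(H)|$ below $f(\Delta)$. Your proposal as written leaves precisely the hardest part of the argument unaddressed; the extremisation you carry out for the blown-up $C_5$ is correct but is the easy endgame, not the structural heart.
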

\noindent
This extends a result of Chung, Gy\'arf\'as, Tuza and Trotter~\cite{CGTT90}.

\medskip
{\em Note added.} In follow-up work, Cames van Batenburg and the second author~\cite{CaKa16+}, using techniques somewhat different from those used here, showed Conjecture~\ref{conj:main} to hold in the case $\omega(G)=3$ and moreover showed a result that together with a result here implies that Theorem~\ref{thm:diametertwo} holds in cases $\omega(G)\in\{3,4\}$.

\subsection{Plan of the paper}

Our paper is organised as follows. In the next subsection, we set some of the notation and two simple results we use. In Section~\ref{sec:clawfree}, we prove Theorem~\ref{thm:clawfree}. We use structural results to prove Theorem~\ref{thm:quasiline} in Section~\ref{sec:quasiline}. In Section~\ref{sec:linemulti}, we apply a sparsity colouring lemma to prove Theorem~\ref{thm:linemulti}. In Section~\ref{sec:main}, we prove Theorem~\ref{thm:main}. We prove the extremal result Theorem~\ref{thm:diametertwo} in Section~\ref{sec:diametertwo}.

\subsection{Notation and preliminaries}

Let $G = (V,E)$ be a (multi)graph. 
For any $v\in V$, we denote the neighbourhood of $v$ by $N_G(v)$ ($= \{w\in V \,|\, vw\in E\}$) and the degree of $v$ by $\deg_G(v)$ ($= |N_G(v)|$).
For any $U\subseteq V$, we denote the neighbourhood of $U$ by $N_G(U)$ ($= \cup_{v\in U} N_G(v) \setminus U$).
The {\em second neighbourhood} $N^2_G(v)$ of $v$ is the set of vertices at distance exactly two from $v$, i.e.~$N^2_G(v)= N_{G^2}(v) \setminus N_{G}(v)$. 
For $A,B\subseteq V$, let $E_G(A,B)$ denote the edges in the bipartite sub(multi)graph induced between $A$ and $B$, i.e.~$E_G(A,B) = \{vw\in E \,|\, v\in A, w\in B\}$ and let $E_G(A)$ denote the edges in the sub(multi)graph of $G$ induced by $A$.
Where there is no possibility of confusion, we usually drop the subscripts.
Note that the square degree $\deg_{G^2}(v)$ of $v$ equals $\deg_G(v) + |N^2_G(v)|$.
For $A,B\subseteq V$, we say $A$ is {\em complete} (resp.~{\em anti-complete}) {\em to} $B$ if all possible (resp.~no) edges between $A$ and $B$ are present.

We shall use the following observation often.

\begin{lemma}\label{lem:cliquesecond}
Let $G = (V,E)$ be a claw-free graph. Let $v\in V$ and $u\in N(v)$.
Then $N(u) \cap N^2(v)$ is a clique and $|N(u) \cap N^2(v)| \le \omega(G) - 1$.
\end{lemma}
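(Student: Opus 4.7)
The plan is to prove both assertions directly from the claw-freeness of $G$, using the defining structure of an induced $K_{1,3}$.

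First I would show the clique property by contradiction. Suppose $w_1, w_2 \in N(u) \cap N^2(v)$ are two distinct vertices that are not adjacent. By definition of the second neighbourhood, neither $w_1$ nor $w_2$ is adjacent to $v$, while both are adjacent to $u$, and $u$ is adjacent to $v$. The three vertices $v, w_1, w_2$ are therefore pairwise non-adjacent, and each is a neighbour of $u$, so $u$ together with $\{v, w_1, w_2\}$ induces a copy of $K_{1,3}$, contradicting the claw-freeness of $G$. Hence any two vertices of $N(u) \cap N^2(v)$ are adjacent, and $N(u) \cap N^2(v)$ is a clique.

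For the size bound, observe that $u$ is adjacent to every vertex of $N(u) \cap N^2(v)$, so $\{u\} \cup (N(u) \cap N^2(v))$ is itself a clique in $G$. Its size is at most $\omega(G)$, which yields $|N(u) \cap N^2(v)| \le \omega(G) - 1$.

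There is no real obstacle here; the statement is essentially a two-line unpacking of the definition of a claw. The only thing to be careful about is not to forget that $v \notin N(u) \cap N^2(v)$ (indeed $v \notin N^2(v)$), so $v$ is genuinely available as the third leaf of the would-be claw, and to note that $w_1 \ne v$ and $w_2 \ne v$ automatically since $w_i \in N^2(v)$.
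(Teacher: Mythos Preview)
Your proof is correct and follows exactly the same approach as the paper: exhibit the claw $\{u,v,w_1,w_2\}$ to force $N(u)\cap N^2(v)$ to be a clique, then adjoin $u$ to bound its size by $\omega(G)-1$.
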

\begin{proof}
  If $x,y \in N(u) \cap N^2(v)$ are not adjacent, then the subset $\{u, v, x, y\}$ is a claw, a contradiction. It follows that $(N(u) \cap N^2(v))\cup \{u\}$ is a clique and has at most $\omega(G)$ vertices.
\end{proof}

Let us first show how this observation together with Theorem~\ref{thm:clawfree} yields a slightly weaker version of Theorem~\ref{thm:main} (weaker when $\omega(G)$ is large) by way of a simple greedy procedure. This agrees with the ``trivial'' bound $\chi(L(F)^2) \le 2\Delta(F)^2-2\Delta(F)+1$ for any (multi)graph $F$.

\begin{proposition}\label{prop:trivial}
  Let $G$ be a claw-free graph. 
  Then $\chi(G^2) \le 2\omega(G)^2-2\omega(G)+1$.
\end{proposition}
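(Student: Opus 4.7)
The plan is to argue by induction on $|V(G)|$ via a smallest-last style greedy colouring. The guiding observation is that if a vertex $v$ in a claw-free graph has $N_G(v)$ inducing a clique in $(G\setminus v)^2$, then in fact $(G\setminus v)^2 = G^2 \setminus v$: every edge of $G^2$ lost by deleting $v$ has $v$ as its only common neighbour in $G$, so its endpoints lie in $N_G(v)$ and remain adjacent in $(G\setminus v)^2$. Consequently, any proper colouring of $(G\setminus v)^2$ is already a proper colouring of $G^2 \setminus v$, so to push the induction through we only need one unused colour to extend to $v$.

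The inductive step then splits into two cases. If $G$ is a quasi-line graph, then $N_G(v)$ is covered by two cliques for every $v$, so $\deg_G(v) \le 2(\omega(G)-1)$. Combined with Lemma~\ref{lem:cliquesecond}, which gives $|N^2(v)| \le \sum_{u \in N(v)} |N(u) \cap N^2(v)| \le \deg_G(v)\cdot(\omega(G)-1)$, this yields $\Delta(G^2) \le 2\omega(G)(\omega(G)-1) = 2\omega(G)^2 - 2\omega(G)$, and the claimed bound follows immediately from straight greedy colouring (no induction needed). Otherwise, Theorem~\ref{thm:clawfree} supplies a vertex $v$ with $\deg_{G^2}(v) \le \omega(G)^2 + (\omega(G)+1)/2$ whose neighbourhood is a clique in $(G\setminus v)^2$; by the guiding observation, the inductive hypothesis applied to the claw-free graph $G \setminus v$ produces a colouring of $G^2 \setminus v$ using at most $2\omega(G)^2 - 2\omega(G)+1$ colours (since $\omega(G\setminus v)\le \omega(G)$ and $2x^2-2x+1$ is increasing in $x$), and a routine check shows $\omega(G)^2 + (\omega(G)+1)/2 < 2\omega(G)^2 - 2\omega(G)+1$ whenever $\omega(G)\ge 3$, leaving a colour available for $v$.

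The small-clique-number base cases must be disposed of directly: $\omega(G)=1$ means $G$ is edgeless, and $\omega(G)=2$ forces $G$ to be triangle-free and claw-free, hence a disjoint union of paths and cycles with $\Delta(G^2)\le 4$, matching the bound $2\omega(G)^2-2\omega(G)+1 = 5$. The main obstacle is really Theorem~\ref{thm:clawfree} itself, which is where the structural work of the paper lives; given that theorem and Lemma~\ref{lem:cliquesecond}, the argument above is a short greedy recursion together with an elementary arithmetic verification.
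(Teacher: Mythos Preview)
Your proof is correct and follows essentially the same approach as the paper: induction on $|V(G)|$, using Theorem~\ref{thm:clawfree} to find a low-square-degree vertex $v$ whose neighbourhood is a clique in $(G\setminus v)^2$, then extending the inductively obtained colouring of $(G\setminus v)^2$ to $v$. Your ``guiding observation'' that $(G\setminus v)^2 = G^2\setminus v$ under this hypothesis is exactly what underlies the paper's line ``necessarily all the vertices in $N_G(v)$ have different colours''; you have just stated it more explicitly. If anything, your write-up is more complete than the paper's: the paper's proof only treats the case where $G$ is not a quasi-line graph and silently leaves the quasi-line case to the reader, whereas you spell out that in a quasi-line graph $\deg_G(v)\le 2(\omega(G)-1)$ and hence $\Delta(G^2)\le 2\omega(G)(\omega(G)-1)$ via Lemma~\ref{lem:cliquesecond}, giving the bound directly by greedy.
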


\begin{proof}
The statement is trivial if $\omega(G) \le 2$, so assume $\omega(G)\ge 3$.
We proceed by induction on the number of vertices in $G$. The base case of $G$ having $3$ vertices is trivially true. So now assume $G$ has at least $4$ vertices and that the result holds for all graphs with fewer vertices than $G$ has.
Note that $\omega(G)^2+(\omega(G)+1)/2+1 \le 2\omega(G)^2-2\omega(G)+1$
if $\omega(G)\ge 3$.

If $G$ is not a quasi-line graph, then let $v$ be the vertex given by Theorem~\ref{thm:clawfree}. Since $G\setminus v$ is a claw-free graph and $\omega(G\setminus v) \le \omega(G)$, it follows by induction that there is a proper colouring of $(G\setminus v)^2$ with $2\omega(G)^2-2\omega(G)+1$ colours. In this colouring, necessarily all the vertices in $N_G(v)$ have different colours. Since $\deg_{G^2}(v) \le \omega(G)^2+(\omega(G)+1)/2$, there is at least one colour available to $v$ that is different from all the colours appearing on $N_{G^2}(v)$. Giving this colour to $v$ yields the desired proper colouring of $G^2$.
\end{proof}


\section{Claw-free graphs to quasi-line graphs}\label{sec:clawfree}

In this section, we prove Theorem~\ref{thm:clawfree}.

\begin{proof}[Proof of Theorem~\ref{thm:clawfree}]
  Let us write $G = (V,E)$ and $\omega(G) =\omega$ and let $v\in V$.
  In this proof, by $N(v)$ and $N^2(v)$ we mean $N_G(v)$ and $N_G^2(v)$ respectively.
  We may assume $\omega\ge 3$, otherwise the statement is trivially true.
  We may also assume without loss of generality that $G$ is connected.
  
  Our first task is to show that either $\deg_{G^2}(v) \le \omega^2+(\omega+1)/2$ or
  the subgraph induced by $N(v)$ can be covered by two cliques of $G$.

  If $G$ has no stable set of size $3$,
  then $|V|$ is less than
  the off-diagonal Ramsey number $R(3,\omega+1)$, which satisfies $R(3,\omega+1)=9$ if $\omega=3$ and $R(3,\omega+1) \le \binom{\omega+2}{2}$ otherwise~\cite{ErSz35}.
  Since $\deg_{G^2}(v) \le |V| - 1$, we have $\deg_{G^2}(v) \le 7$ if $\omega=3$ and $\deg_{G^2}(v)\le \binom{\omega+2}{2}-2$ otherwise, which in either case is at most $\omega^2+(\omega+1)/2$.
  
  Now we may assume that $G$ has a stable set of size $3$, in which case it follows from Result~8.2 of~\cite{ChSe05}
  that $\deg(v) \le 4(\omega - 1)$.
  Further, we may also assume that $N^2(v)$ is non-empty
  because otherwise $\deg_{G^2}(v) = \deg(v) \le 4(\omega - 1)$
  which is at most $\omega^2+(\omega+1)/2$
  since $\omega \ge 3$.

  Let $k$ be the largest integer such that every vertex of $N^2(v)$ has at least
  $k$ (parent) neighbours in $N(v)$ and let $u \in N^2(v)$ be a vertex attaining this
  minimum, i.e. such that $|N(u) \cap N(v)| = k$.
  Let $w \in N(u) \cap N(v)$ and
  consider the partition $N(v) = X \cup C_1 \cup C_2$ defined by
  $X = N(u) \cap N(v)\setminus \{w\}$ and
  $C_1 = (N(v) \cap N(w) \setminus X) \cup \{ w \}$.
  (So $C_2 = N(v) \setminus (N(u) \cup N(w))$.)
  We claim that both $C_1$ and $C_2$ are cliques of $G$.
  Indeed, if $x$ and $y$ are non-adjacent vertices of $C_1$
  then $w \notin \{x,y\}$ and further
  $\{x, y, w, u\}$ induces a claw, while if $x$ and $y$ are non-adjacent vertices of $C_2$ 
  then $\{x, y, w, v\}$ induces a claw, contradicting the assumption on $G$.

  We now estimate the number of paths of length two in $G$ from $v$ to $N^2(v)$.
  By Lemma~\ref{lem:cliquesecond}, there are at most $(\omega - 1) \deg(v)$
  such paths. By our assumption every vertex of $N^2(v)$ is the endpoint of
  at least $k$ of these paths and so $|N^2(v)|$ is at most $\frac{1}{k}(\omega - 1)\deg(v).$
  Consequently,
  \begin{align}\label{eq:twopaths}\tag{$\ast$}
    \deg_{G^2}(v) = \deg(v) + |N^2(v)|
    \le \left(1 + \frac{\omega - 1}{k}\right)\deg(v).\end{align}
  
  We distinguish three cases depending on the value of $k$.

\paragraph{\textbf{$k=1$:}}
  The set $X$ is empty and it follows that
  $N(v)$ induces a subgraph that can be covered by two cliques, namely $C_1$ and $C_2$.
\paragraph{\textbf{$2 \le k \le 2(\omega - 1)$:}}
  For $i\in\{1,2\}$, $|C_i| \le \omega - 1$ since $C_i \cup \{ v \}$
  is a clique. So 
  \begin{align*}\deg(v) = |C_1| + |C_2| + |X| \le 2(\omega - 1) + k - 1.\end{align*}
  Hence~\eqref{eq:twopaths} gives
  \begin{align*}
    \deg_{G^2}(v) 
    & \leq \left(1 + \frac{\omega - 1}{k}\right)(2(\omega - 1) + k - 1) =: f(k).
  \end{align*}
  The above expression $f(k)$ is a convex function of $k$ for $2 \le k \le 2(\omega - 1)$ so $\deg_{G^2}(v)$ is at most $\max\{f(2),f(2(\omega - 1))\}$. It remains to check that
  \begin{align*} f(2) = \left(1 + \frac{\omega-1}{2}\right)(2\omega - 1)
    = \omega^2+\frac{\omega-1}{2}
    < \omega^2+\frac{\omega+1}{2} \\
    \text{and } \quad
  f(2(\omega-1)) =
    6 \omega - \frac{15}{2} < \omega^2+\frac{\omega+1}{2}\end{align*}
  which is true since $\omega \ge 3$.

  \paragraph{\textbf{$k \ge 2(\omega - 1) + 1$:}}
  Together with the fact that $\deg(v) \leq 4(\omega - 1)$,~\eqref{eq:twopaths} yields
  \begin{align*}
    \deg_{G^2}(v) \le \left(1 + \frac{\omega - 1}{2(\omega - 1) + 1}\right)\cdot4(\omega - 1)
    < 6(\omega-1).
  \end{align*}
  Consequently,
  $\deg_{G^2}(v) \le 6\omega - 7 \le \omega^2+(\omega+1)/2$
  since $\omega \ge 3$.

    Our second task is to prove that, if the neighbourhood $N(v)$ does not
    form a clique in $(G \setminus v)^2$, then it is covered by two cliques (of $G$).
    Assume to the contrary that $v$ has two neighbours $x$ and $y$
    at distance at least $3$ in $G \setminus v$.
    Every other neighbour $z$ of $v$ has to be adjacent to either $x$ or $y$
    (otherwise $\{v,x,y,z\}$ induces a claw), but cannot be adjacent to
    both of them (otherwise the distance between $x$ and $y$ is at most $2$).
    It follows that $N(v)$ is covered by the union of
    $N(v) \cap N(x)$ and $N(v) \cap N(y)$.
    It remains to see that each of these sets is a clique because any
    non-edge $uw$ in $N(v) \cap N(x)$ (resp.~$N(v) \cap N(y)$) would
    give a claw $\{v, y, u, w\}$ (resp.~$\{v, x, u, w\}$).
\end{proof}


\section{Quasi-line graphs to line graphs of multigraphs}\label{sec:quasiline}

In this section, we prove Theorem~\ref{thm:quasiline}.

We rely on a known structural description of quasi-line graphs, for which we next give the necessary definitions. For further details and discussion, see~\cite{ChSe05}.

Let $G=(V,E)$ be a graph. A \emph{homogeneous set} is a set $S \subseteq V$ such that each vertex in $V \setminus S$ is adjacent either to all vertices in $S$, or to no vertex in $S$.
A \emph{homogeneous pair of cliques} is a pair $(A, B)$ of disjoint cliques such that either
$|A| \ge 2$ or $|B| \ge 2$, $A$ is a homogeneous set in $G[V \setminus B]$
and $B$ is a homogeneous set in $G[V \setminus A]$.

A {\em circular interval graph}
is any graph obtained from the following construction. 
Let $\Sigma$ be a circle and
$F_1, \ldots, F_k \subseteq \Sigma$ be a set of intervals each homeomorphic to the
interval $[0, 1]$. Let the vertex set be a finite set of points of $\Sigma$ and add an edge
between any two points if and only if they are both contained in $F_i$
for some $i \in \{1, \ldots, k\}$.
A \emph{linear interval graph} is defined in the same way, except that $\Sigma$ is a line
instead of a circle.
Observe that circular and linear interval graphs are quasi-line graphs.

A \emph{strip} $(G, a, b)$ consists of a claw-free graph $G$ and two
vertices $a$ and $b$ of $G$ such that $N_G(a)$ and $N_G(b)$ are cliques. The specified vertices $a$ and $b$ are called the {\em ends} of the strip.
In the particular case where $G$ is a linear interval graph and admits a representation in a line $\Sigma$ such that the vertices of $G$ in order along $\Sigma$ are $v_1,\dots,v_n$, we call $(G, v_1, v_n)$ a \emph{linear interval strip}.

The following operation combines two strips $(G_1, a_1, b_1)$ and $(G_2, a_2, b_2)$ to produce a claw-free graph.
Let $A_1 = N_{G_1 \setminus b_1}(a_1)$, $B_1 = N_{G_1 \setminus a_1}(b_1)$, $A_2 = N_{G_2 \setminus b_2}(a_2)$ and $B_2 = N_{G_2 \setminus a_2}(b_2)$.
 The graph obtained from the disjoint union of $G_1
\setminus \{a_1, b_1\}$ and $G_2 \setminus \{a_2, b_2\}$ by adding all possible
edges between $A_1$ and $A_2$ and all possible edges between $B_1$ and $B_2$ is called the {\em composition} of $(G_1, a_1, b_1)$ and $(G_2, a_2, b_2)$.
This graph is claw-free.

We combine $k\ge 3$ strips in the following way.
Let $G_0$ be a disjoint union of complete graphs on vertex set $\{a_1, \ldots, a_k, b_1, \ldots, b_k\}$.
For each $i \in \{1, \ldots, k\}$ let $(G_{i}', a_{i}', b_{i}')$ be a strip and
let $G_i$ be the graph obtained by composing $(G_{i-1}, a_i, b_i)$ and
$(G_{i}', a_{i}', b_{i}')$. The ultimate (claw-free) graph $G_k$ is called a \emph{composition}
of the strips $(G_1', a_1', b_1'),\dots,(G_k', a_k', b_k')$.

We apply the following structural result for the class of quasi-line graphs.
\begin{theorem}[Chudnovsky and Seymour~\cite{ChSe12}]\label{thm:decomposition}
Suppose $G$ is a connected quasi-line graph. Then one of the following must hold:
\begin{enumerate}
\item\label{first}
$G$ has a homogeneous pair of cliques,
\item\label{second}
$G$ is a circular interval graph, or
\item\label{third}
$G$ is a composition of linear interval strips.
\end{enumerate}
\end{theorem}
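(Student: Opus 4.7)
The plan is to follow the high-level strategy of Chudnovsky and Seymour: reduce to the case where $G$ has no homogeneous pair of cliques (so that case (i) fails) and then manufacture a circular or linear interval representation directly from the local two-clique covers that define a quasi-line graph. The starting point is the defining property: at every vertex $v$, the open neighbourhood $N(v)$ can be covered by two cliques $K_1(v) \cup K_2(v)$, and the approach is to globalise this local data.

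First I would assume (i) fails and fix, for each $v \in V(G)$, a cover $N(v) = K_1(v) \cup K_2(v)$. For an edge $uv$, say $v$ lies on the \emph{left side} of $u$ if $v \in K_1(u) \setminus K_2(u)$, on the \emph{right side} if $v \in K_2(u) \setminus K_1(u)$, and is \emph{ambiguous} otherwise; vertices with a large set of ambiguous neighbours will be suspected of participating in a homogeneous pair of cliques. The central combinatorial step is to show that, after possibly swapping $K_1(v) \leftrightarrow K_2(v)$ independently at each vertex, the assignment of sides becomes consistent along edges: for every edge $uv$, the side of $v$ at $u$ and the side of $u$ at $v$ can be made to agree under a canonical matching of the two sides.

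Second, I would assemble the oriented local structures into a global representation. Consistency of sides yields, at each vertex, a coherent splitting of its neighbourhood into two linearly ordered pieces, and these local orders patch together along edges. If the resulting structure closes up into a single cyclic order on $V(G)$, one recovers a placement of vertices on a circle $\Sigma$ together with arcs $F_1, \dots, F_k$ witnessing all edges, giving case (ii). Otherwise, the structure breaks at vertices whose neighbourhoods already form cliques, and these vertices serve as the end pairs $(a_i', b_i')$ of linear interval strips whose composition reproduces $G$, giving case (iii). Uniqueness of the two-clique cover (up to swap) for non-ambiguous vertices, together with connectedness of $G$, would be what prevents spurious branching.

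The main obstacle is the consistency step: ruling out a $G$-cycle on which the forced side-assignments fail to close, in the spirit of a M\"obius-type obstruction. The crux of the Chudnovsky--Seymour argument is to show that any such obstruction either forces a homogeneous pair of cliques (contradicting the standing assumption) or is itself absorbed into a circular interval representation. Carrying this out rigorously requires delicate local analysis of triangles, co-triangles, and of twin and near-twin substructures, and is the principal reason the proof in~\cite{ChSe12} is substantially more involved than the schema above suggests; I would treat the above as a roadmap rather than a self-contained argument.
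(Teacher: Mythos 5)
This statement is not proved in the paper at all: it is imported verbatim from Chudnovsky and Seymour~\cite{ChSe12} as a known structure theorem, and the paper only proves a small refinement of it (Proposition~\ref{remark}) plus applications. So there is no in-paper argument to compare against, and your proposal has to stand on its own as a proof of the cited theorem. It does not. What you have written is a statement of intent: the entire mathematical content of the theorem is concentrated in the ``central combinatorial step'' of making the local two-clique covers globally consistent and showing that every failure of consistency is absorbed either by a homogeneous pair of cliques or by a circular interval representation. You name this step, observe that it is the crux, and then explicitly decline to carry it out (``I would treat the above as a roadmap rather than a self-contained argument''). A roadmap whose only nontrivial waypoint is ``prove the theorem here'' is a genuine gap, and in this case the gap is essentially the whole proof.

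Beyond the omission itself, the sketch glosses over points where the naive globalisation scheme actually breaks. The two-clique cover of a neighbourhood is far from unique, and the set of ``ambiguous'' neighbours (those lying in both cliques) can be large; this is precisely where homogeneous pairs of cliques arise, and handling it requires the W-join/homogeneous-pair machinery of~\cite{ChSe12}, not just a parity-style swap argument. The clean trichotomy as stated also silently relies on the homogeneous-pair clause to absorb what Chudnovsky and Seymour call ``fuzziness'' in circular interval representations; without engaging with that, one cannot legitimately claim that a consistent cyclic order yields a genuine circular interval graph, nor that the break points of a non-closing order are exactly the ends of linear interval strips whose composition (over a base disjoint union of cliques, with the gluing rules in Section~\ref{sec:quasiline}) reconstructs $G$. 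None of these issues is addressed, so the proposal cannot be accepted as a proof; for the purposes of this paper the correct move is simply to cite~\cite{ChSe12}, as the authors do.
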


In fact, we need a small refinement of Theorem~\ref{thm:decomposition}.

\begin{proposition}\label{remark}
  Theorem~\ref{thm:decomposition} remains true if~\ref{first}
  is instead the following:
  \begin{enumerate}
  \item
    $G$ has a homogeneous pair $(A,B)$ of cliques where $A$ is not a homogeneous set.
  \end{enumerate}
\end{proposition}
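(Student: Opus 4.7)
The plan is to show, by modifying an arbitrary homogeneous pair of cliques $(A,B)$ of $G$, that either the refined condition in~(i) is satisfied or $G$ falls into case~(ii) or (iii) of Theorem~\ref{thm:decomposition}.

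First, note that the definition of a homogeneous pair of cliques is symmetric in $A$ and $B$: so if $B$ is not a homogeneous set of $G$, the pair $(B,A)$ witnesses the refined condition. We may therefore assume throughout that in every hom pair we consider, both $A$ and $B$ are homogeneous sets of $G$. A short argument then shows that $A$ is either entirely complete or entirely anticomplete to $B$: every $a\in A$ is complete or anticomplete to $B$ (since $B$ is homogeneous) and every $b\in B$ is complete or anticomplete to $A$ (since $A$ is homogeneous), so a single cross-edge or non-edge propagates to the entire bipartite structure between $A$ and $B$.

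Next, among all such hom pairs I would pick one with $|A\cup B|$ as large as possible. I aim to argue that $A\cup B$ must itself be a homogeneous set of $G$. Indeed, if some $v\in V\setminus(A\cup B)$ had mixed adjacency with $A\cup B$ (say, complete to $A$ but anticomplete to $B$), then in the complete case the candidate pair $(A\cup\{v\},B)$ already has a non-homogeneous first part, witnessed by any $b\in B$ (which is adjacent to all of $A$ but not to $v$); when the homogeneity condition at the outside boundary of this candidate pair fails, a similar modification using a different outside vertex of mixed type, or an exchange between $A$ and $B$, produces a hom pair either larger in $|A'\cup B'|$ (violating maximality) or with first part non-homogeneous, as desired.

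Finally, with $A\cup B$ a homogeneous set of $G$, contract it to a single vertex to obtain a smaller connected quasi-line graph $G'$, and apply Theorem~\ref{thm:decomposition} inductively to $G'$. Any outcome (ii) or (iii) for $G'$ lifts back to $G$ because circular interval graphs and compositions of linear interval strips are closed under substituting a vertex by a clique of twins (the copies can be placed at a common location of $\Sigma$ or within the same strip position), while a case~(i) outcome for $G'$ gives a hom pair of $G$ on which the modification argument iterates. The main obstacles will be the case analysis in the modification step (to force $A\cup B$ to be homogeneous under the maximality hypothesis) and the verification that contracting a homogeneous clique preserves the quasi-line property; both reduce to a careful inspection of the four possible adjacency types of outside vertices relative to $A$ and $B$, exploiting the fact that vertices of a homogeneous clique are twins and thus can be freely identified for structural purposes.
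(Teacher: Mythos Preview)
Your high-level idea --- reduce to a smaller quasi-line graph by contracting homogeneous cliques, apply Theorem~\ref{thm:decomposition} there, and lift each outcome back by placing copies at a common point of $\Sigma$ or inside a strip --- is exactly the spirit of the paper's proof. But two of your steps are genuinely incomplete.

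First, the ``modification step'' does not go through as sketched. In the complete case you propose $(A\cup\{v\},B)$ and note that any $b\in B$ is mixed on $A\cup\{v\}$. That is fine for showing $A\cup\{v\}$ is not a homogeneous set, but it does nothing to ensure that $(A\cup\{v\},B)$ is a homogeneous \emph{pair}: any $w\in V\setminus(A\cup B\cup\{v\})$ complete to $A$ but non-adjacent to $v$ (or anticomplete to $A$ but adjacent to $v$) destroys the required homogeneity of $A\cup\{v\}$ in $G\setminus B$. Your fallback --- ``a similar modification using a different outside vertex of mixed type, or an exchange between $A$ and $B$'' --- is not specified, and simple examples show that neither adding such a $w$ to $A$ nor swapping the roles of $A$ and $B$ need produce a valid homogeneous pair. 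The maximality of $|A\cup B|$ does not obviously rescue this. Second, in the anticomplete case your argument collapses: $A\cup B$ is not a clique, so ``contract it to a single vertex'' does not yield an induced subgraph (so quasi-line-ness is not automatic), and your witness $b\in B$ is now anticomplete to all of $A\cup\{v\}$, so it no longer certifies non-homogeneity.

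The paper sidesteps both issues with one clean move: rather than fixing a single pair and enlarging it, it quotients $G$ simultaneously by \emph{all} maximal cliques that are homogeneous sets, obtaining a quasi-line graph $\tilde G$ with no homogeneous clique of size at least two. Theorem~\ref{thm:decomposition} is then applied once to $\tilde G$. If $\tilde G$ lands in case~(i) with a homogeneous pair $(A,B)$ and, say, $|A|\ge 2$, then $A$ is a clique of size $\ge 2$ in a graph with no homogeneous clique of size $\ge 2$, so $A$ is automatically non-homogeneous --- no modification or case analysis needed. The blow-up back to $G$ is exactly the substitution-of-twins you describe. This is the missing idea: take the quotient globally so that the refined property of case~(i) is forced for free, rather than trying to enlarge one pair at a time.
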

\begin{proof}
First note that if $C$ and $C'$ are maximal among cliques that are homogeneous sets, then either they are disjoint or they are equal.
  Thus we may partition the vertex set $V$ of $G$ as $V=C_1 \cup\dots\cup C_m$
  where $C_1,\dots,C_m$ are maximal among cliques that are homogeneous sets of $G$.
  (It is allowed for $|C_i|=1$.)
  Let $\tilde{G} = (\tilde{V},\tilde{E})$ be the quotient graph of $G$ with respect to this partition,
  i.e.~the graph on $\tilde{V}=\{1,\dots,m\}$ such that $ij \in \tilde{E}$
  (resp.~$ij \notin \tilde{E}$)
  if and only if $C_i$ is complete (resp.~anti-complete) to $C_j$ in $G$.
  It must be that $\tilde{G}$ is a connected quasi-line graph, or else $G$ is not a connected quasi-line graph.
  Moreover, $\tilde{G}$ has no clique of size two that is a homogeneous set.
  
  Given $I \subseteq \tilde{V}$ and a graph $H$ on $I$, we define $\mathcal{B}(I) = \bigcup_{i \in I}C_i$ as well as a graph $\mathcal{B}(H)$ on $\mathcal{B}(I)$ as follows: let $C_i$ induce a clique in $\mathcal{B}(H)$ for every $i\in I$, and include $uv$ as an edge of $\mathcal{B}(H)$ for every pair $(u,v)\in C_i\times C_j$ if $ij$ is an edge of $H$.
  Note that $\mathcal{B}(\tilde{G})=G$.
  
  Theorem~\ref{thm:decomposition} applied to $\tilde{G}$ yields three possibilities:
  \begin{enumerate}
  \item[\ref{first}] $\tilde{G}$ has a pair $(A, B)$ of homogeneous cliques. Then $(\mathcal{B}(A),\mathcal{B}(B))$ is a homogeneous pair of cliques of $G$. We may assume that $|A| \ge 2$ and, since $\tilde{G}$ has no clique of size two that is a homogeneous set, $A$ by itself is not a homogeneous set of $\tilde{G}$. Thus $\mathcal{B}(A)$ is not a homogeneous set of $G$.
  \item[\ref{second}] $\tilde{G}$ is a circular interval graph.
    Then $G$ is too. 
  \item[\ref{third}] $\tilde{G}$ is the composition of $k$ linear interval strips
    $(G_1', a_1', b_1'),\dots,(G_k', a_k', b_k')$ over the vertex set $\{a_1,\dots,a_k,b_1,\dots,b_k\}$.
    Then $G$ is the composition of the linear interval strips
    $(G_1'', a_1', b_1'),\dots,(G_k'', a_k', b_k')$ following the same scheme
     where $G_i''$ is defined as the graph $\mathcal{B}(G_i' \setminus \{a_i',b_i'\})$
    to which we add the vertices $a_i'$ and $b_i'$ with neighbourhoods
    $\mathcal{B}(N_{G_i'}(a_i'))$ and $\mathcal{B}(N_{G_i'}(b_i'))$ respectively.
    \qedhere
  \end{enumerate}
\end{proof}

To prove Theorem~\ref{thm:quasiline},
we require the following bound on the maximum square degree of circular
interval graphs.

\begin{lemma}\label{lem:circular}
For any vertex $v$ of a circular interval graph $G$, $\deg_{G^2}(v) \le 4\omega(G)-4$.
\end{lemma}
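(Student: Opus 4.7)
The plan is to bound $\deg(v)$ and $|N^2(v)|$ separately, each by $2(\omega - 1)$ where $\omega = \omega(G)$; since $\deg_{G^2}(v) = \deg(v) + |N^2(v)|$, this yields the desired bound. Fix a circular interval representation $(\Sigma, F_1, \ldots, F_k)$ of $G$ and set $U = \bigcup_{F_i \ni v} F_i$; since each $F_i \ni v$ is an arc through $v$, $U$ is itself an arc of $\Sigma$ containing $v$, and $N(v) \cup \{v\}$ is exactly the set of vertices in $U$.

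The degree bound is straightforward. Split $N(v)$ into its clockwise and counterclockwise halves relative to $v$ within $U$. The clockwise half together with $v$ lies in a single interval through $v$ (namely the one of maximal clockwise extent) and hence forms a clique of size at most $\omega$; symmetrically for the counterclockwise half. This gives $\deg(v) \le 2(\omega - 1)$.

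For the second-neighbourhood bound, the case $U = \Sigma$ is trivial since then $v$ is universal and $N^2(v) = \emptyset$, so assume $U \neq \Sigma$. Any $u \in N^2(v)$ is adjacent to some $w \in N(v)$ via a common interval $F'$ that necessarily avoids $v$ (else $u \sim v$), so $F' \subseteq \Sigma \setminus \{v\}$. I would classify such \emph{relevant} intervals by which halves of $N(v)$ they meet: only the clockwise half (type A), only the counterclockwise half (type B), or both (type C).

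In case (C), the arc $F'$ contains vertices on both sides of $v$ but avoids $v$ itself, which forces it to be the ``long way around'', wrapping through the opposite side of the circle and in particular sweeping across the entire gap $\Sigma \setminus U$. Hence $N^2(v) \subseteq F'$; since $F'$ is a clique of size at most $\omega$ containing at least two neighbours of $v$, this gives $|N^2(v)| \le \omega - 2$. Otherwise every relevant interval is of type (A) or (B); picking the type-(A) interval reaching deepest into the gap together with its type-(B) analogue, their union covers $N^2(v)$, and each is a clique of size at most $\omega$ containing at least one neighbour of $v$, so $|N^2(v)| \le 2(\omega - 1)$. The principal subtlety I anticipate is the geometric verification underlying case (C)'s ``long way around'' claim, i.e.\ confirming that an arc meeting both halves of $N(v)$ without crossing $v$ is genuinely forced to sweep across the entire complement of $U$.
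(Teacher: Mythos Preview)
Your approach is correct and in spirit the same as the paper's: both cover $N_{G^2}(v)$ by four interval-cliques drawn from the representation, exploiting that two intervals suffice to reach from $v$ out to any vertex at distance at most two on either side; the paper does this in one stroke by selecting the furthest members $w_c,w_{cc}$ of $N_{G^2}(v)$ in each rotational direction together with the pair of intervals realising a two-step path to each, whereas you bound $\deg(v)$ and $|N^2(v)|$ separately by two cliques apiece and add a (harmless) type-(C) case distinction that the paper's uniform argument absorbs automatically. Your anticipated subtlety in case~(C) is genuine but easy: an arc avoiding $v$ is an interval of $\Sigma\setminus\{v\}$, so if it contains points on both sides of $v$ it must contain the entire complementary arc, hence all of $\Sigma\setminus U$.

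One small wrinkle worth patching: your degree argument tacitly assumes $U \ne \Sigma$, since otherwise $N(v)$ has no well-defined clockwise/counterclockwise halves. You handle $U=\Sigma$ for the $|N^2(v)|$ bound but should also note there that the two intervals through $v$ of maximal clockwise and counterclockwise extent then cover all of $V$, giving $\deg_{G^2}(v)=|V|-1\le 2\omega-2$ directly.
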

\begin{proof}
Write $G=(V,E)$ and let $v\in V$.
  Let $\Sigma$ be a circle and let $F_1,\dots,F_k\subseteq\Sigma$ be intervals
  homeomorphic to $[0,1]$ that represent $G$.
In other words, $V$ is a subset of $\Sigma$ such that
$uw\in E$ if and only if $u$ and $w$ are both contained in $F_i$
  for some $i \in \{1,\dots, k\}$.

Note then that for every $w \in N_{G^2}(v)$ there exist $i_w,j_w \in \{1, \dots, k\}$ such that a closed interval $I_w\subseteq\Sigma$ with endpoints $v$ and $w$ is contained in $F_{i_w} \cup F_{j_w}$.
  Take $w_c$ and $w_{cc}$ to be those among the elements of $N_{G^2}(v)$ having largest intervals $I_{w_c}$ and $I_{w_{cc}}$ in, respectively, clockwise and counterclockwise direction from the perspective of $v$. Then $N_{G^2}(v)$ is contained in $F_{i_{w_c}} \cup F_{j_{w_c}} \cup F_{i_{w_{cc}}} \cup F_{j_{w_{cc}}}$ which implies that it can be covered by four cliques of $G$ such that $v$ and two distinct vertices of $N_{G^2}(v)$ each belong to more than one of the cliques. This implies the required bound.
\end{proof}

The following bound is obtained in a similar way.

\begin{lemma}\label{lem:interval}
  Let $(G, a, b)$ be a linear interval strip.
  For any vertex $v \in N_{G}(a)$,
  $\deg_{G^2}(v) \le 3\omega(G)-3$.
\end{lemma}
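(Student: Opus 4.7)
My plan is to adapt the argument of Lemma~\ref{lem:circular} to the linear case, exploiting the endpoint condition $v \in N_G(a)$ to replace one of the four ``extreme'' cliques used there by the single clique $N_G(a) \cup \{a\}$, so that $N_{G^2}(v) \cup \{v\}$ gets covered by only three cliques of $G$ rather than four.

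Write $v_1 = a, v_2, \ldots, v_n = b$ for the vertices of $G$ in the order they appear along the representing line $\Sigma$. Since $(G,a,b)$ is a strip, $K_0 := N_G(a) \cup \{a\}$ is a clique of size at most $\omega(G)$; since $a$ sits at the left end of $\Sigma$, $K_0$ is in fact a contiguous initial segment $\{v_1, \ldots, v_s\}$. Because $v \in N_G(a)$, we have $v = v_j$ for some $2 \le j \le s$, so every vertex of $\Sigma$ strictly to the left of $v$ lies in $K_0$ and is in particular adjacent to $v$. Consequently the ``left portion'' of $N_{G^2}(v)$ is already covered by the single clique $K_0$.

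For the right portion, I will pick the rightmost $w_+ \in N_{G^2}(v)$ (if any), together with an intermediate vertex $u$ on a $v$-to-$w_+$ path of length at most two (allowing $u = v$ when the distance is one), and then select intervals $F_i, F_j$ of the representation with $v, u \in F_i$ and $u, w_+ \in F_j$. Just as in Lemma~\ref{lem:circular}, the segment $[v, w_+] \subseteq \Sigma$ is contained in $F_i \cup F_j$, so every right-side element of $N_{G^2}(v)$ lies in $K_1 \cup K_2$, where $K_1 := F_i \cap V$ and $K_2 := F_j \cap V$ are cliques of size at most $\omega(G)$.

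Putting these together, $N_{G^2}(v) \cup \{v\} \subseteq K_0 \cup K_1 \cup K_2$, and since $v \in K_0 \cap K_1$ and $u \in K_1 \cap K_2$, the chain-of-differences bound
\[
|K_0 \cup K_1 \cup K_2| \le |K_0| + |K_1 \setminus K_0| + |K_2 \setminus K_1|
\]
gives at most $3\omega(G) - 2$, and subtracting $v$ yields $|N_{G^2}(v)| \le 3\omega(G) - 3$. The only technical wrinkle will be verifying the segment-covering claim for $[v, w_+]$, which reduces to a short case check according to whether $u$ lies between $v$ and $w_+$ on $\Sigma$ or to one side; degenerate cases (no right portion at all, or $w_+$ already a neighbour of $v$ so that $K_1 = K_2$) only improve the bound.
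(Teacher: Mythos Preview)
Your proof is correct and follows essentially the same strategy as the paper's: cover $N_{G^2}(v)$ by three cliques of $G$ coming from the interval representation, observe two shared vertices among consecutive cliques, and count. The only cosmetic differences are that the paper takes the leftmost clique to be $F_{i_1}\cap V$ for an interval $F_{i_1}$ containing $a$ and $v$ (rather than your $K_0=N_G(a)\cup\{a\}$, which is the same set), and that you are more explicit than the paper about the degenerate cases (distance one to $w_+$, and the position of the intermediate vertex).
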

\begin{proof}
Write $G=(V,E)$.
  Let $F_1,\dots,F_k\subseteq\Sigma$ be closed intervals of $\mathbb{R}$
  that represent $G$. So $V$ is a subset of $\mathbb{R}$
  with minimum $a$ and maximum $b$ such that $uw\in E$
  if and only if $u$ and $w$ are both contained in $F_i$
  for some $i \in \{1,\dots, k\}$.

  Let $u$ be the vertex of $N_{G^2}(v)$ with the largest value
  and $w \in V$ such that $vw$ and $wu$ are edges.
  Let $F_{i_1}$ be an interval that contains both $a$ and $v$.
  Similarly, let $F_{i_2}$ and $F_{i_3}$ be the intervals containing
  both $v$ and $w$ and both $w$ and $u$ respectively.
  Then $N_{G^2}(v)$ is contained in the union of the cliques
  $F_{i_1} \cap V$, $F_{i_2} \cap V$ and $F_{i_3} \cap V$ since
  $F_{i_1} \cup F_{i_2} \cup F_{i_3}$ covers the interval $[a,u]$.
  Moreover, $v$ and $w$ are both elements of at least two of these cliques.
  This implies the required bound.
\end{proof}

We can now proceed to the proof of Theorem~\ref{thm:quasiline}.

\begin{proof}[Proof of Theorem~\ref{thm:quasiline}]
Let us write $G=(V,E)$ and $\omega(G)=\omega$.
  We may assume $\omega\ge 3$, otherwise the statement is trivially true.
  Since we can consider components independently, we may assume that $G$ is connected.
  
  Let us call a vertex $v$ \emph{degenerate} if $\deg_{G^2}(v) \leq \omega^2+\omega$.
  
  By Theorem~\ref{thm:decomposition} and Proposition~\ref{remark}, there are three cases to consider.

\begin{itemize}
\item[\ref{first}] $G$ contains a homogeneous pair $(A,B)$ of cliques 
  and there exist $a_1,a_2\in A$ and $b \in B$ such that $a_1b\in E$ and $a_2b\notin E$.
\end{itemize}
We first prove that every vertex of $A \cup B$ is degenerate.
Let $A'$ and $B'$ denote the set of vertices in $V\setminus (A \cup B)$ that are connected to
(all) vertices of $A$ and $B$ respectively and
set $C = V \setminus (A \cup B \cup A' \cup B')$.
Notice there is no edge from $A' \cap B'$ to $C$ as
such an edge together with $a_2$ and $b$ would form a claw.
It follows that every vertex $c \in C$ in the second neighbourhood
of any $b_0 \in B$
has a neighbour in $B' \setminus A'$,
i.e $N_{G^2}(b_0) \cap C \subseteq N(B'\setminus A') \cap C$.
So
\begin{align*}
  \deg_{G^2}(b_0) \le |N(B'\setminus A') \cap C| + |B| + |B'| + |A\cup(A'\setminus B')|.
\end{align*}
  It remains to bound the terms of this sum.
  We claim that $B\cup(B'\setminus A')$ is a clique.
  Indeed, if there is a non-adjacent pair $x,y \in B' \setminus A'$
  then $\{x, y, a_1, b\}$ induces a claw.
  This shows that $|B| \le \omega - |B' \setminus A'|$
  and $|B' \setminus A'| \le \omega - 1$.
  
  Similarly, $A\cup(A'\setminus B')$ is a clique because any non-adjacent
  pair $x,y \in A'$ forms a claw $\{x, y, a_1, b\}$.
  This proves that $|A\cup(A'\setminus B')| \le \omega$.
  
  By Lemma~\ref{lem:cliquesecond}, every $b' \in B'$
  has at most $\omega - 1$ neighbours in $C$, and so
  $|N(B' \setminus A') \cap C|
  \le |B' \setminus A'|\cdot(\omega - 1)$.
  Moreover, $|B'| \le 2(\omega - 1)$ because
  $B'$ is contained in the neighbourhood of any vertex of $B$
  and $G$ is a quasi-line graph.
  Putting these inequalities together gives
  \begin{align*}
    \deg_{G^2}(b_0)
    \le (\omega - 2)|B'\setminus A'|+4\omega-2
    \le (\omega-2)(\omega-1)+4\omega-2 = \omega^2+\omega.
  \end{align*}
  So we have proved that every vertex in $B$ is degenerate.
  A similar argument proves that every vertex in $A$ is degenerate.
    We deduce the theorem for $v=a_1$ with $S = (A \cup B)\cap N(a_1)$.
    Indeed, the vertices of $N(v) \setminus S = A'$ are all adjacent to $a_2$.

\begin{itemize}
\item[\ref{second}] $G$ is a circular interval graph.
\end{itemize}
  By Lemma~\ref{lem:circular}, the square degree of any $v \in V$ satisfies
  $\deg_{G^2}(v) \le 4\omega-4$, which is at most $\omega^2+\omega$ since $\omega\ge3$.
 So it suffices to take $S=N(v)$ as every vertex is degenerate.
  
\begin{itemize}
\item[\ref{third}] $G$ is a composition of $k$ linear interval strips $(G_1', a_1', b_1'),\dots,(G_k', a_k', b_k')$ over $G_0$ a disjoint union of cliques on vertex set $\{a_1,\dots,a_k,b_1,\dots,b_k\}$.

\end{itemize}
Suppose $G_0$ is the disjoint union of $\ell$ cliques $C_1,\dots,C_\ell$.
  We have that $V = \bigcup_{i=1}^kV(G_i') \setminus \{a_i',b_i'\}$
  and that $G$ is the union of $G_i'\setminus\{a_i',b_i'\}$, $i \in \{1,\dots, k\}$,
  ``glued'' to the cliques on $C_1',\dots,C_\ell'$ defined by
  \begin{align*}
  C_j' := \left(\bigcup_{a_i' \in C_j} N_{G_i'}(a_i') \setminus \{b_i'\}\right)
  \bigcup \left(\bigcup_{b_i' \in C_j} N_{G_i'}(b_i') \setminus \{a_i'\}\right).
  \end{align*}

  Fix $i \in \{1,\dots,k\}$ and denote $C_{j_1}$ and $C_{j_2}$ the cliques
  (of $G_0$) such that $a_i \in C_{j_1}$ and $b_i \in C_{j_2}$.
  Let $H_i$ be the subgraph of $G$ induced by
  $C_{j_1}'\cup C_{j_2}'\cup (V(G_i')\setminus\{a,b\})$.

  We first observe the following three claims.
  \begin{claim}\label{claim:newstrip}
    There are $a_i'' \in C_{j_1}'$ and $b_i'' \in C_{j_2}'$
    such that $(a_i'',b_i'',H_i)$ is a linear interval strip.
  \end{claim}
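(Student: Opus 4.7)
The plan is to construct an explicit linear interval representation of $H_i$ starting from the given representation of $G_i'$, and then to choose $a_i''$ and $b_i''$ as the extremal vertices in that new representation. I would start by fixing a representation of $G_i'$ on a line $\Sigma$ with vertices in order $v_1=a_i',v_2,\ldots,v_{n-1},v_n=b_i'$ and intervals $F_1,\ldots,F_m$. Because $a_i'$ is the leftmost vertex of $\Sigma$, convexity of each $F_j$ forces $N_{G_i'}(a_i')$ to be an initial segment $\{v_2,\ldots,v_p\}$, and symmetrically $N_{G_i'}(b_i')=\{v_q,\ldots,v_{n-1}\}$. Writing $X_{j_1}=N_{G_i'}(a_i')\setminus\{b_i'\}$, $X_{j_2}=N_{G_i'}(b_i')\setminus\{a_i'\}$, $Y_{j_1}=C_{j_1}'\setminus V(G_i')$ and $Y_{j_2}=C_{j_2}'\setminus V(G_i')$, the definition of the composition ensures that $Y_{j_\ell}$ is a clique, is complete to $X_{j_\ell}$ and is anti-complete to the rest of $V(G_i')\setminus\{a_i',b_i'\}$ (for $\ell\in\{1,2\}$), and there are no edges between $Y_{j_1}$ and $Y_{j_2}$ when $j_1\neq j_2$.

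My next step would be to build a representation of $H_i$ by inserting $Y_{j_1}$ on $\Sigma$, in arbitrary order, just to the left of $v_2$; inserting $Y_{j_2}$ just to the right of $v_{n-1}$; restricting each $F_j$ to $V(G_i')\setminus\{v_1,v_n\}$ (discarding any that become empty); and adding two new closed intervals with footprints $Y_{j_1}\cup X_{j_1}$ and $X_{j_2}\cup Y_{j_2}$, each of which is a contiguous portion of $\Sigma$ by the initial-segment property of $X_{j_1}$ and $X_{j_2}$. A routine edge-by-edge check would confirm that this interval system realises $H_i$: the restricted $F_j$'s carry every edge inside $V(G_i')\setminus\{v_1,v_n\}$; the new left interval carries precisely the edges within $Y_{j_1}$ and between $Y_{j_1}$ and $X_{j_1}$; symmetrically on the right; and no spurious edge appears between $Y_{j_1}$ and $\{v_{p+1},\ldots,v_{n-1}\}$, nor between $Y_{j_1}$ and $Y_{j_2}$. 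The degenerate case $j_1=j_2$ would simply merge the two new intervals into one, which is correct since then all edges between the two sides belong to $G$.

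Finally, I would take $a_i''$ to be any vertex of $Y_{j_1}$ when that set is nonempty, and $a_i''=v_2$ otherwise; the hypothesis $C_{j_1}'\neq\emptyset$ together with $Y_{j_1}=\emptyset$ forces $X_{j_1}\neq\emptyset$, and convexity gives $v_2\in X_{j_1}\subseteq C_{j_1}'$. I would choose $b_i''$ symmetrically. After a harmless permutation inside $Y_{j_1}$ and $Y_{j_2}$, the vertex $a_i''$ becomes leftmost along $\Sigma$ and $b_i''$ becomes rightmost. It would only remain to verify that $N_{H_i}(a_i'')$ is a clique: when $a_i''\in Y_{j_1}$ this is immediate because $N_{H_i}(a_i'')\subseteq C_{j_1}'$, a clique of $G$; when $a_i''=v_2$, convexity shows that $N_{G_i'}(v_2)\cap\{v_3,\ldots,v_n\}$ is a consecutive set $\{v_3,\ldots,v_r\}$ lying inside a single interval witnessing $v_2v_r\in E$, so $N_{G_i'}(v_2)\setminus\{v_1,v_n\}$ is a clique in $H_i$, and any extra $Y_{j_2}$-neighbours (which arise only when $v_2\in X_{j_2}$) remain inside the clique $C_{j_2}'$.

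The main obstacle I anticipate is precisely the empty-$Y$ case, in which the end of the new strip has to be drawn from $X_{j_\ell}$ itself; one must then check both that $v_2$ legitimately serves as a leftmost vertex of $H_i$ in the new representation and that $N_{H_i}(v_2)$ is still a clique. The combination of the strip hypothesis (that $N_{G_i'}(a_i')$ is a clique) with the convexity of intervals in a linear interval representation is exactly what makes this step go through.
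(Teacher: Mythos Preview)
Your proposal is correct and follows essentially the same approach as the paper: build a linear interval representation of $H_i$ from the given representation of $G_i'$ by deleting $a_i',b_i'$ and placing the extra clique-vertices at the two ends, then take $a_i'',b_i''$ to be extremal. The paper's version is slightly slicker---it simply assigns the old coordinate of $a_i'$ (resp.\ $b_i'$) to every vertex of $C_{j_1}'\setminus N_{G_i'}(a_i')$ (resp.\ $C_{j_2}'\setminus N_{G_i'}(b_i')$) and keeps the original intervals unchanged, so no new intervals or case analysis on empty $Y$-sets are needed.
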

  \begin{proof}
    Let $F_1,\dots,F_k$ be intervals homeomorphic to $[0,1]$
    and assume that the elements of $V(G_i')$ are real values such that the intervals
    $F_1,\dots,F_k$ represent $(a_i,b_i,G_i')$ as a linear interval strip.
    To construct a representation of $H_i$ as a linear interval strip,
    it suffices to keep the intervals $F_1,\dots,F_k$
    as well as the real values of the elements of $V(G_i')\setminus\{a_i,b_i\}$
    and assign the value of $a_i$ to all elements of
    $C_{j_1}' \setminus N_{G_i'}(a_i)$ and the value of $b_i$ to all elements
    of $C_{j_2}' \setminus N_{G_i'}(b_i)$.
    It then suffices to define $a_i''$ (resp.~$b_i''$) as one of the vertices
    with smallest (resp.~greatest) value.
  \end{proof}
  
\begin{claim}\label{claim:degenerate1}
If $v \in V(G_i') \setminus (\{a_i',b_i'\}\cup N_{G_i'}(a_i') \cup N_{G_i'}(b_i'))$ for some $i \in \{1,\dots, k\}$, then $v$ is degenerate.
\end{claim}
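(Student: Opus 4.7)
The plan is to show that the whole second neighbourhood of $v$ in $G$ is contained in $V(H_i)$, and then to apply Lemma~\ref{lem:circular} to $H_i$. The hypothesis that $v$ is neither an end of $G_i'$ nor a neighbour in $G_i'$ of an end is exactly what ``buries'' $v$ deeply enough inside the strip $G_i'$ to make such a containment possible.

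More concretely, I would first show $N_G(v) \subseteq V(G_i') \setminus \{a_i', b_i'\}$ and then, for every $u \in N_G(v)$, $N_G(u) \subseteq V(H_i)$. Since the ends $a_i', b_i'$ are removed by the composition, a vertex of $V(G_i')\setminus\{a_i',b_i'\}$ acquires neighbours in $G$ outside $V(G_i')\setminus\{a_i',b_i'\}$ only through the glue cliques $C_{j_1}'$ and $C_{j_2}'$, and only if it lies in $N_{G_i'}(a_i')$ or $N_{G_i'}(b_i')$ respectively. The hypothesis rules this out for $v$ itself, so $N_G(v)$ coincides with $N_{G_i'}(v)$ and sits inside $V(G_i') \setminus \{a_i', b_i'\}$. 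The same analysis applied to any $u \in N_G(v)$ now shows $N_G(u) \subseteq (V(G_i') \setminus \{a_i', b_i'\}) \cup C_{j_1}' \cup C_{j_2}' = V(H_i)$. Combining these inclusions gives $N_{G^2}(v) \subseteq V(H_i)$.

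Because $H_i$ is an induced subgraph of $G$, the containment $N_{G^2}(v) \subseteq V(H_i)$ also forces $\deg_{G^2}(v) = \deg_{H_i^2}(v)$. By Claim~\ref{claim:newstrip} the strip $H_i$ is a linear interval graph, and hence a fortiori a circular interval graph, so Lemma~\ref{lem:circular} yields $\deg_{H_i^2}(v) \le 4\omega(H_i) - 4 \le 4\omega - 4$, which is at most $\omega^2 + \omega$ for $\omega \ge 3$. Thus $v$ is degenerate. The only real subtlety is the bookkeeping in the previous paragraph, where one has to unwind the definition of the composition to confirm that neither $v$'s neighbourhood nor its second neighbourhood can leak out of the local block $V(H_i)$; once this is in place, the circular interval bound finishes the proof with room to spare.
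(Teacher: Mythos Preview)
Your proposal is correct and follows essentially the same approach as the paper: the paper's proof simply asserts $N_{G^2}(v) = N_{(H_i)^2}(v)$ and then applies Claim~\ref{claim:newstrip} together with Lemma~\ref{lem:circular}, while you spell out in detail why that neighbourhood identity holds. Your unwinding of the composition is exactly the justification the paper leaves implicit.
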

\begin{proof}
  Notice that $N_{G^2}(v) = N_{(H_i)^2}(v)$.
  Moreover, Claim~\ref{claim:newstrip} ensures that $H_i$ is a linear
  interval graph and thus a circular interval graph. So
  Lemma~\ref{lem:circular} yields
  \begin{align*}
    \deg_{G^2}(v) = \deg_{(H_i)^2}(v) \le 4\omega-4.
  \end{align*}
  This is at most $\omega^2+\omega$ since $\omega\ge3$.
\end{proof}
\begin{claim}\label{claim:degenerate2}
If $v$ is in $V(G_i') \setminus (\{a_i',b_i'\}\cup N_{G_i'}(a_i'))$ or in $V(G_i') \setminus (\{a_i',b_i'\}\cup N_{G_i'}(b_i'))$
 for some $i \in \{1,\dots, k\}$, then $v$ is degenerate.
\end{claim}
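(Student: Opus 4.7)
The plan is to reduce to Claim~\ref{claim:degenerate1} when possible, and otherwise to apply Lemma~\ref{lem:interval} to the linear interval strip supplied by Claim~\ref{claim:newstrip}, together with a small correction for those second-neighbours of $v$ that escape $H_i$ through the composition clique.

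Without loss of generality I assume the first alternative, so $v \notin N_{G_i'}(a_i')$. If in addition $v \notin N_{G_i'}(b_i')$, then $v$ already satisfies the hypothesis of Claim~\ref{claim:degenerate1} and is degenerate, so I may further assume $v \in N_{G_i'}(b_i')$; by the definition of $C_{j_2}'$ this places $v$ inside the clique $C_{j_2}'$. The key structural observation is then $N_G(v) \subseteq V(H_i)$: the $G_i'$-neighbours of $v$ lie in $V(G_i') \setminus \{a_i', b_i'\} \subseteq V(H_i)$, and because $v$ is adjacent to $b_i'$ but not to $a_i'$, the only additional composition-neighbours it gains are those of $C_{j_2}' \setminus \{v\}$, again in $V(H_i)$. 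Choosing $b_i''$ inside $C_{j_2}'$ (distinct from $v$ whenever possible) in the construction of Claim~\ref{claim:newstrip}, one gets $v \in N_{H_i}(b_i'')$, and the symmetric form of Lemma~\ref{lem:interval} applied to $(H_i, a_i'', b_i'')$ gives $\deg_{(H_i)^2}(v) \le 3\omega - 3$.

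The step I expect to be the most delicate is bounding the contribution of vertices $x \in N^2_G(v) \setminus V(H_i)$. For such an $x$, any common neighbour $w$ of $v$ and $x$ cannot lie in $V(G_i') \setminus \{a_i', b_i'\}$, since every $G$-neighbour of such a $w$ is already inside $V(H_i)$; thus $w$ must belong to $W := C_{j_2}' \setminus V(G_i')$. Because $C_{j_2}'$ is a clique of size at most $\omega$ containing $v$, we have $|W| \le \omega - 1$, and Lemma~\ref{lem:cliquesecond} bounds each $|N_G(w) \cap N^2_G(v)|$ by $\omega - 1$. Summing over $W$ gives $|N^2_G(v) \setminus V(H_i)| \le (\omega-1)^2$, and combining with the interval-strip bound yields
\begin{equation*}
\deg_{G^2}(v) \le (3\omega - 3) + (\omega-1)^2 = \omega^2 + \omega - 2 \le \omega^2 + \omega.
\end{equation*}
The residual case $v = b_i''$ forces $C_{j_2}' \subseteq V(G_i')$, so $W = \emptyset$ and $N^2_G(v) \subseteq V(H_i)$; in that case I would simply apply Lemma~\ref{lem:circular} to $H_i$ (a linear, hence circular, interval graph) to obtain $\deg_{G^2}(v) \le 4\omega - 4$, comfortably below $\omega^2 + \omega$ since $\omega \ge 3$.
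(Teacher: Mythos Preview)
Your argument is correct and follows essentially the same route as the paper's: reduce to Claim~\ref{claim:degenerate1} when possible, otherwise apply Lemma~\ref{lem:interval} inside the linear interval strip $H_i$ from Claim~\ref{claim:newstrip} and add a $(\omega-1)^2$ correction (via Lemma~\ref{lem:cliquesecond}) for second neighbours that escape through the composition clique. The only differences are cosmetic---you take the $b_i'$-side instead of the $a_i'$-side, and you bound the escaping second neighbours through the smaller set $W=C_{j_2}'\setminus V(G_i')$ rather than all of $C_{j_2}'\setminus\{v\}$---and you are in fact a bit more careful than the paper in explicitly treating the residual case $v=b_i''$.
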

\begin{proof}
  By Claim~\ref{claim:degenerate1} and
  by symmetry of the roles played by $a_i'$ and $b_i'$, we may assume
  that $v$ is a neighbour of $a_i'$ but not of $b_i'$ in $G_i'$.
  Notice that in this case, $N_{G^2}(v)$ is contained in
  $N_{(H_i)^2}(v) \cup (N(C_{j_1}')\cap N^2(v)).$
  Claim~\ref{claim:newstrip} guarantees some $a_i'' \in C_{j_1}'$ and $b_i'' \in C_{j_2}'$ such that $(a_i'',b_i'',H_i)$ is a linear interval strip.
  Notice that $v \in C_{j_1}'$ is a neighbour of $a_i''$ in $G$ (and thus in $H_i$).
  Hence Lemma~\ref{lem:interval} applies and yields
  $\deg_{(H_i)^2}(v) \leq 3\omega-3$.
  
  By Lemma~\ref{lem:cliquesecond},
   $|N(C_{j_1}')\cap N^2(v)|
  \le (|C_{j_1}'|-1)(\omega-1) \le (\omega-1)^2$.
  So
  \begin{align*}
    \deg_{G^2}(v) &\le \deg_{(H_i)^2}(v) + |N(C_{j_1}')\cap N^2(v)|\\
    &\le 3\omega-3 + (\omega-1)^2 = \omega^2 +\omega -2
  \end{align*}
  which is less than $\omega^2 +\omega$.
\end{proof}

Case~\ref{third} now divides into three subcases.
   \begin{itemize}
     \setlength\itemindent{10pt}
   \item[\ref{third}(a)]
There exists $v \in V(G_i') \setminus (\{a_i',b_i'\}\cup N_{G_i'}(a_i') \cup N_{G_i'}(b_i'))$ for some $i \in \{1,\dots, k\}$.
   \end{itemize}
   
   Claims~\ref{claim:degenerate1} and~\ref{claim:degenerate2} imply that $v$ and every neighbour of $v$ is degenerate. So take $S=N(v)$.
  
  \begin{itemize}
    \setlength\itemindent{10pt}
  \item[\ref{third}(b)]
For every $i \in \{1,\dots, k\}$, $V(G_i') = N_{G_i'}(a_i') \cup N_{G_i'}(b_i') \cup \{a_i', b_i'\}$ and there exists $i_0$ and a vertex $v$ either in $V(G_{i_0}') \setminus (\{a_{i_0}',b_{i_0}'\}\cup N_{G_{i_0}'}(a_{i_0}'))$ or in $V(G_{i_0}') \setminus (\{a_{i_0}',b_{i_0}'\}\cup N_{G_{i_0}'}(b_{i_0}'))$.
  \end{itemize}
  
  Claims~\ref{claim:degenerate1} and~\ref{claim:degenerate2} imply that $v$ is degenerate and every neighbour of $v$ is either in the strip  $G'_{i_0}$
      (in which case it is degenerate) or in the clique $C_{j_1}'$. So take $S = N(v)\setminus C_{j_1}'$.

\begin{itemize}
\setlength\itemindent{10pt}
\item[\ref{third}(c)]
  For every $i \in \{1,\dots, k\}$,
  $V(G_i')\setminus \{a_i',b_i'\} = N_{G_i'}(a_i') \cap N_{G_i'}(b_i')$.
\end{itemize}

  In this case, the graph $G$ is the (non-disjoint) union
  of the cliques $C_j'$, $j \in \{1,\dots, \ell\}$
  because any edge of $G_i'\setminus \{a_i',b_i'\}$ is contained in the clique
  $C_j'$ such that $a_i \in C_j$.
  Moreover, each vertex $v \in V$ belongs to exactly
  two such cliques $C_{j_v^1}'$ and $C_{j_v^2}'$.
  It follows that $G$ is a line graph of a multigraph.
  In particular, $G$ is the line graph of the multigraph on vertex set $C_1,\dots,C_\ell$ with 
  an edge between $C_{j_v^1}'$ and $C_{j_v^2}'$ for each vertex $v \in V$.
\end{proof}


\section{Line graphs of multigraphs}\label{sec:linemulti}

In this section, we prove Theorem~\ref{thm:linemulti}.
Without loss of generality, we may assume hereafter that multigraphs are loopless.
Since we are now close enough to the original problem of Erd\H{o}s and Ne\v{s}et\v{r}il, let us recast Theorem~\ref{thm:linemulti} in terms of edge-colouring.
A \emph{strong edge-colouring} of a (multi)graph $F$ is a proper edge-colouring of $G$ such that any two edges with an edge between them are also required to have distinct colours.
The \emph{strong chromatic index} $\chi_s'(F)$ of $F$ is the smallest integer $k$ such that $F$ admits a strong edge-colouring using $k$ colours.

\begin{theorem}\label{thm:strongmulti}
There are some absolute constants $\eps>0$ and $\Delta_0$ such that $\chi_s'(F)\le (2-\eps)\Delta(F)^2$ for any multigraph $F$ with $\Delta(F) \ge \Delta_0$.
\end{theorem}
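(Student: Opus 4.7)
The plan is to bound $\chi(L(F)^2)$ via a probabilistic local-sparsity colouring lemma of the sort developed by Molloy and Reed~\cite{MoRe97}. Such a lemma asserts that there exist absolute $\sigma_0, D_0 > 0$ and a function $\sigma \mapsto \sigma'(\sigma) > 0$ so that any graph $H$ with $\Delta(H) = D \ge D_0$ in which every vertex $v$ satisfies $|E(H[N_H(v)])| \le (1-\sigma)\binom{D}{2}$ admits a proper colouring with at most $(1-\sigma'(\sigma))D$ colours. Applying this to $H = L(F)^2$, which has maximum degree at most $2\Delta^2 - 2\Delta$ (each vertex of $L(F)^2$ is an edge of $F$ with at most $2\Delta^2 - 2\Delta$ other edges of $F$ at $F$-distance at most $1$), it remains to exhibit, at every vertex $e$ of $H$, either a substantially smaller degree or $\Omega(\Delta^4)$ non-edges inside $L(F)^2[N_{L(F)^2}(e)]$.

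Fix $e = uv \in E(F)$ and split the square-neighbourhood $N$ of $e$ into \emph{inner} edges (those incident to $u$ or $v$) and \emph{outer} edges (those incident only to some $w \in N_F(\{u,v\}) \setminus \{u,v\}$). A short check shows that any pair of square-neighbours of $e$ involving at least one inner edge is automatically adjacent in $L(F)^2$, via intermediate edges at $u$ or $v$, so every non-edge of $L(F)^2[N]$ lies between two outer edges. Writing $A = N_F(u) \setminus (N_F(v) \cup \{v\})$ and $B = N_F(v) \setminus (N_F(u) \cup \{u\})$ for the ``private'' neighbourhoods, an outer pair $(f,g)$ with $f = xy$, $x \in A$, and $g = x'y'$, $x' \in B$, is a non-edge of $L(F)^2$ precisely when $\{x,y\} \cap \{x',y'\} = \emptyset$ and $F$ has no edge between $\{x,y\}$ and $\{x',y'\}$. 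Set $\alpha_e := \min\{|A|, |B|\}/\Delta$. In the dense regime $\alpha_e < \alpha_0$, a routine count gives $\deg_{L(F)^2}(e) \le (1 + 2\alpha_e)\Delta^2 + O(\Delta)$, which is already below $(2-\varepsilon_0)\Delta^2$ for a fixed absolute $\varepsilon_0 > 0$; in the sparse regime $\alpha_e \ge \alpha_0$, the choices of $(x,x') \in A\times B$, most of which are non-adjacent in $F$, deliver the required $\Omega(\Delta^4)$ non-edges after a standard double-counting argument controlling how often $F$ has edges between $\{x,y\}$ and $\{x',y'\}$.

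Parallel edges play a passive role: a parallel bundle between $u$ and a neighbour only inflates the population of inner edges, which never obstructs the outer non-edges and in fact only reduces the effective outer degree contribution. Combining the two regimes and invoking the sparse colouring lemma yields $\chi(L(F)^2) \le (2-\varepsilon)\Delta^2$ for an absolute $\varepsilon > 0$ and all $\Delta \ge \Delta_0$. The main obstacle, I expect, is the accounting in the sparse regime — specifically, ruling out configurations in which many vertices of $A$ are linked in $F$ to many vertices of $B$, thereby killing too many outer non-edges. Following Molloy and Reed, one handles this by a nested case split on $|E_F(A, B)|$, which carries through for multigraphs since multiplicity only augments the edge count on one side at a time, never creating fresh structural constraints on the non-edges being counted.
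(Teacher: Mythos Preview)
Your overall framework matches the paper: apply the Molloy--Reed local-sparsity colouring lemma (Lemma~\ref{lem:MoRe}) to $L(F)^2$, after checking (Lemma~\ref{lem:multisparse}) that every square-neighbourhood is missing a positive fraction of its potential edges. The divergence is entirely in how you propose to establish that sparsity, and here there are genuine gaps.

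First, a concrete error. Your claim that ``any pair of square-neighbours of $e$ involving at least one inner edge is automatically adjacent in $L(F)^2$'' is false. Take $e=uv$, $f=uw$ with $w\in A$, and $g=w'z$ with $w'\in B$ and $z\notin N_F(\{u,v\})$; then $f$ and $g$ are both square-neighbours of $e$, but they lie at distance~$3$ in $L(F)$ whenever $ww',wz\notin E(F)$ (note $uw',uz\notin E(F)$ automatically). This mistake is not load-bearing, since you only need a \emph{lower} bound on non-edges and are free to restrict to outer--outer pairs regardless; but it indicates the ``short check'' was not actually performed.

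Second, and more seriously, the sparse regime is left as a promise. Your statement that Molloy and Reed handle it ``by a nested case split on $|E_F(A,B)|$'' is not what they do, nor what the paper does. The paper's three cases are governed by (i) the number of edges internal to $N_F(u_1)\cup N_F(u_2)$ together with explicit multiplicity corrections, (ii) the number of two-edge paths from that neighbourhood through the second neighbourhood $C$ and back out, and (iii) a residual case handled by counting closed four-edge walks between $A\cup B$ and $C$ and applying Jensen's inequality twice. Your $\alpha_e$ dichotomy is a different decomposition, and in the regime $\alpha_e\ge\alpha_0$ you still owe an argument that the outer edges anchored in $A$ and those anchored in $B$ are not too densely interconnected in $F$; ``a standard double-counting argument'' does not stand in for the convexity step that actually does the work here. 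Finally, the remark that ``parallel edges play a passive role'' is too glib: the paper's multigraph adaptation needs the explicit terms $(2\Delta-1)|M|$ and $\sum_{i\ge 2}(i-1)\Delta|\Lambda_i|$ in the exact expression~\eqref{eqn:squaredegree} for $\deg_{L(F)^2}(e)$, and these do not fall out of your inner/outer bookkeeping.
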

\noindent
Since $\chi_s'(F) = \chi(L(F)^2)$ and $\Delta(F)\le\omega(L(F))$ for any multigraph $F$, this implies Theorem~\ref{thm:linemulti}.
Indeed, due to the ``trivial'' upper bound $\chi_s'(F) \le 2\Delta(F)^2-2\Delta(F)+1$, it suffices to choose $\min\{\eps,3/\Delta_0\}$ for the constant certifying Theorem~\ref{thm:linemulti}.

It seems to us that allowing edges of large multiplicity tends to lead to a smaller strong chromatic index. For instance, given a multigraph $F=(V,E)$ with $\Delta(F) \le \Delta$, if $e$ is in an edge of multiplicity $\eps\Delta$, then easily we have that $\deg_{L(F)^2}(e) \le 2(1-\eps)\Delta^2+O(\Delta)$. So we do not need to consider any multigraph with an edge of multiplicity $3\Delta/8$ or more. It does, however, seem difficult in general to eliminate consideration of all those edges, say, of multiplicity two.

Rather, to prove Theorem~\ref{thm:strongmulti}, we take the tack of Molloy and Reed used to affirm the original question of Erd\H{o}s and Ne\v{s}et\v{r}il. We employ a bound on the chromatic number of graphs whose neighbourhoods are not too dense.
The following can be shown with the probabilistic method.

\begin{lemma}[Molloy and Reed~\cite{MoRe97}]\label{lem:MoRe}
For any $\eps > 0$, there exist $\delta > 0$ and $\Delta_0$ such that the following holds. For all $\Delta\ge\Delta_0$, if $G$ is a graph with $\Delta(G) \le \Delta$ and with at most $(1-\eps)\binom{\Delta}2$ edges in each neighbourhood, then $\chi(G) \le (1-\delta)\Delta$.
\end{lemma}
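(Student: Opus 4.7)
The plan is to prove Lemma~\ref{lem:MoRe} by the probabilistic method, combining a naive random colouring with the Lov\'asz Local Lemma, iterated in the spirit of the R\"odl nibble. In a single step, assign each vertex of $G$ an independent uniform random colour from a palette of size $\Delta$, and then uncolour every vertex whose colour coincides with that of some neighbour. For a fixed vertex $v$, let $Y_v$ denote the number of distinct colours retained by the coloured neighbours of $v$; the quantity $\Delta - \mathbb{E}[Y_v]$ measures the expected number of ``free'' colours still available for $v$, and the baseline value (when $N(v)$ is a clique) is close to $\Delta/e$.

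The sparsity hypothesis buys additional free colours. When two neighbours $u, w$ of $v$ are non-adjacent, the events ``$u$ keeps its colour'' and ``$w$ keeps its colour'' are much less correlated than when $u, w$ are adjacent; accordingly, when they both survive, they are likelier to block \emph{distinct} colours from $v$'s palette rather than to collide on the same blocked colour. Exploiting the assumption that at least $\varepsilon\binom{\Delta}{2}$ pairs in $N(v)$ are non-adjacent, a direct expectation calculation yields $\mathbb{E}[\Delta - Y_v] \ge (1+\gamma)\Delta/e$ for some $\gamma = \gamma(\varepsilon) > 0$. Talagrand's inequality gives concentration of $Y_v$, and the Lov\'asz Local Lemma applied to the bad events ``$Y_v$ deviates significantly from its mean'' -- each event being determined by colours of vertices at bounded distance from $v$ -- delivers, with positive probability, a partial colouring in which every $v$ has at least $(1+\gamma/2)\Delta/e$ still-available colours.

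After one such nibble step, the residual graph on uncoloured vertices has maximum degree roughly $\Delta/e$, while the effective palette at each uncoloured vertex has shrunk only by a factor $(1+\gamma/2)$ weaker. Iterating $O(\log \Delta)$ times, until the residual degree becomes a constant and the remaining graph can be finished greedily, compounds the saving factor $(1+\gamma/2)$ into an overall saving of $(1-\delta)$ in the total number of colours, which yields $\chi(G) \le (1-\delta)\Delta$ for some $\delta = \delta(\varepsilon) > 0$ and all $\Delta \ge \Delta_0(\varepsilon)$. The main obstacle is to show that a suitably weakened form of the neighbourhood-sparsity hypothesis survives each nibble step -- otherwise the saving $\gamma$ cannot be reinvested in the next iteration -- and this is typically handled by tracking an auxiliary potential that simultaneously controls the degree, the palette size, and the number of edges in each neighbourhood of the residual graph.
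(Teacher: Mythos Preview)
The paper does not prove this lemma; it is quoted verbatim from Molloy and Reed~\cite{MoRe97} and used as a black box. So there is no ``paper's own proof'' to compare against, only the original Molloy--Reed argument.

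Your outline is broadly in the right family of ideas but gets two things wrong. First, the heuristic in your second paragraph is reversed. The point is not that non-adjacent neighbours $u,w$ of $v$ are ``likelier to block distinct colours''; on the contrary, if $u,w$ are adjacent and both survive the uncolouring step then they \emph{must} carry distinct colours, whereas if $u,w$ are non-adjacent they may both receive and retain the \emph{same} colour. Sparsity in $N(v)$ therefore creates many opportunities for retained colours to be \emph{repeated} among the neighbours of $v$, which is precisely what drives $Y_v$ down and frees up palette entries. (Relatedly, your baseline ``$\Delta/e$'' for $\Delta-\mathbb{E}[Y_v]$ is off: when $N(v)$ is a clique roughly $\Delta/e$ neighbours survive, all with distinct colours, so $\Delta-\mathbb{E}[Y_v]\approx(1-1/e)\Delta$.)

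Second, the actual Molloy--Reed proof is a \emph{single} round of naive colouring followed by greedy completion, not an iterated nibble. One shows via the Local Lemma that, with positive probability, every vertex has at least $\delta\Delta$ colours appearing at least twice among its retained neighbours; this many repetitions means the number of uncoloured neighbours plus the number of blocked colours is at most $(1-\delta)\Delta$ at every vertex, so a greedy finish with $(1-\delta)\Delta$ colours succeeds. Your ``main obstacle'' --- propagating a sparsity hypothesis through the iterations --- is genuine for nibble-style arguments (Johansson, Kim), but Molloy and Reed sidestep it entirely by not iterating.
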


Since $\Delta(L(F)^2) \le 2\Delta(F)(\Delta(F)-1)$, we obtain Theorem~\ref{thm:strongmulti} 
by an application of Lemma~\ref{lem:MoRe} to $L(F)^2$, the validity of which is certified as follows.

\begin{lemma}\label{lem:multisparse}
There are absolute constants $\eps>0$ and $\Delta_0$ such that the following holds. For all $\Delta\ge\Delta_0$, if $F=(V,E)$ is a multigraph with $\Delta(F) \le \Delta$, then $N_{L(F)^2}(e)$ induces a subgraph of $L(F)^2$ with at most $(1-\eps)\binom{2\Delta(\Delta-1)}{2}$ edges for any $e\in E$. 
\end{lemma}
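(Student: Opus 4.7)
The plan is to fix $e=xy \in E(F)$ and write $N_2 = N_{L(F)^2}(e)$. The vertices of $N_2$ are exactly the edges $f \ne e$ of $F$ incident to some vertex of $W := N_F(x) \cup N_F(y)$, and a pair $\{f_1,f_2\} \subseteq N_2$ contributes an edge to the induced subgraph of $L(F)^2$ precisely when $f_1$ and $f_2$ are at distance at most $2$ in $L(F)$, i.e.~share a vertex in $F$ or have an $L(F)$-common neighbour. I would fix a small constant $\alpha>0$ and split on whether some edge in $\{e\} \cup N_{L(F)}(e)$ has multiplicity at least $\alpha \Delta$.

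In Case~1, an edge of multiplicity at least $\alpha\Delta$ incident to $\{x,y\}$ forces $|N_F(x)|$ or $|N_F(y)|$ to drop by $\Omega(\alpha \Delta)$ (since a parallel bundle consumes $\alpha\Delta$ units of degree without producing new vertices in $W$). Exactly as noted in the remark preceding the lemma, this gives $|N_2| \le (2-\eta)\Delta^2 + O(\Delta)$ for some $\eta=\eta(\alpha)>0$, so $\binom{|N_2|}{2} \le ((2-\eta)/2)^2 \binom{2\Delta(\Delta-1)}{2} + O(\Delta^3)$ and the inequality holds outright for some $\eps=\eps(\alpha)>0$ once $\Delta$ is large.

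In Case~2, every edge incident to $\{x,y\}$ has multiplicity less than $\alpha\Delta$, and the plan is to mimic the Molloy--Reed sparsity argument by exhibiting $\Omega(\Delta^4)$ pairs of edges in $N_2$ at $L(F)$-distance at least $3$. The intended construction is: pick $z_1 \in N_F(x) \setminus (N_F(y) \cup \{y\})$, $z_2 \in N_F(y) \setminus (N_F(x) \cup \{x\})$ with $z_1 z_2 \notin E(F)$, an edge $f_1$ at $z_1$ with other endpoint $u_1 \notin \{x,y,z_2\} \cup N_F(z_2)$, and an edge $f_2$ at $z_2$ with other endpoint $u_2 \notin \{x,y,z_1,u_1\} \cup N_F(z_1) \cup N_F(u_1)$. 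Then $f_1$ and $f_2$ share no endpoint and no potential common $L(F)$-neighbour (namely none of $z_1 z_2, z_1 u_2, u_1 z_2, u_1 u_2$ is an edge of $F$), so they are at $L(F)$-distance at least $3$. Under low multiplicities each forbidden set excludes only $O(\alpha\Delta)$ choices, and one obtains the desired $\eps \binom{2\Delta(\Delta-1)}{2}$ non-edges provided $|N_F(x)\setminus N_F(y)|$ and $|N_F(y)\setminus N_F(x)|$ are both $\Omega(\Delta)$.

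The main obstacle is handling the degenerate sub-cases of Case~2 in which the clean construction above fails: large $|N_F(x)\cap N_F(y)|$ (few eligible $z_i$'s), densely populated bipartite structure between $N_F(x)$ and $N_F(y)$ (few eligible $f_i$'s), or abundance of short cycles through $e$. For each such degeneracy one must either derive a sharper bound on $|N_2|$, absorbing into Case~1, or produce the required non-adjacent pairs by a different choice (for instance, non-adjacent pairs of edges hanging from two distinct common neighbours of $x$ and $y$ that avoid the joint neighbourhood). This is the Molloy--Reed case analysis, adapted by treating multiplicities up to $\alpha\Delta$ as lower-order perturbations of the simple-graph counts.
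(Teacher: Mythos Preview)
Your two-case split misses the main mechanism. In Case~2, bounding multiplicities at $\{x,y\}$ does nothing to control adjacencies among or beyond $N_F(x)\cup N_F(y)$, and the assertion that ``each forbidden set excludes only $O(\alpha\Delta)$ choices'' is false in general. Concretely, take $F=K_{\Delta,\Delta}$ with $x$ and $y$ in opposite parts: then $N_F(x)\setminus(N_F(y)\cup\{y\})$ and $N_F(y)\setminus(N_F(x)\cup\{x\})$ each have $\Delta-1$ elements and all edges are simple, yet every candidate $z_1$ is adjacent to every candidate $z_2$, so your construction produces no pair at all. You flag such degeneracies as the ``main obstacle'' but then defer them entirely, and they are precisely where the content of the lemma lives; moreover the failure here has nothing to do with multiplicity, so treating low multiplicities as ``lower-order perturbations of the simple-graph counts'' does not rescue the argument.

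The paper's proof (which \emph{is} the Molloy--Reed argument, adapted to multigraphs) is organised differently: it does not split on multiplicity and does not construct explicit far-apart pairs. It sets three thresholds $\eps_1,\eps_2,\eps_3$ and shows: (i) if $|E_F(A\cup B)|$ plus the multiplicity contributions exceeds $\eps_1\Delta^2$, then $\deg_{L(F)^2}(e)\le(2-\eps_1)\Delta^2$ already; (ii) if $\sum_{c\in C}|E_F(\{c\},A\cup B)|\,(\Delta-|E_F(\{c\},A\cup B)|)>\eps_2\Delta^3$, then many three-edge walks leave $N_{L(F)^2}(e)$, bounding common neighbourhoods; (iii) otherwise, almost all $(A\cup B)$--$C$ edges concentrate on a set $C'\subseteq C$ of $O(\Delta/\eps_3)$ vertices each receiving at least $\eps_3\Delta$ such edges, and two applications of Jensen's inequality to $\binom{\cdot}{2}$ yield $\Omega(\Delta^4)$ closed four-edge walks using only edges between $A\cup B$ and $C$, each a repeated adjacency inside the induced subgraph. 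The $K_{\Delta,\Delta}$ example lands in~(i) because $|E_F(A\cup B)|=(\Delta-1)^2$. So the ``Molloy--Reed case analysis'' you invoke is this thresholded counting argument, not an explicit pair construction, and your Case~2 would need to be replaced by it rather than perturbed into it.
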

Molloy and Reed proved this for $F$ simple.
We have adapted their proof to account for edges of multiplicity.
The adaptation is mainly technical but not completely straightforward, so we include the proof details for completeness.
We remark that it is also possible to adapt a proof of Lemma~\ref{lem:multisparse} for $F$ simple recently given by Bruhn and Joos~\cite{BrJo15+} that yields an asymptotically extremal answer.
On the other hand, it is known that this approach, via Lemma~\ref{lem:MoRe}, is insufficient alone to yield the optimal constant $\eps=3/4$ in Theorem~\ref{thm:strongmulti} for $F$ a simple graph. So we have made no effort to look for a value better than what we obtained here.

\begin{proof}[Proof of Lemma~\ref{lem:multisparse}]
We specify the constant $\eps > 0$ as well as some other constants $\eps_1,\eps_2,\eps_3>0$ later in the proof. 
Let $F=(V,E)$ be a multigraph with $\Delta(F) \le \Delta$. 
Without loss of generality, we may assume that $F$ is $\Delta$-regular.

Let $e=u_1u_2\in E$. Let $A = N_F(u_1) \setminus \{u_2\}$, $B=N_F(u_2)
\setminus \{u_1\}$ and $C = N_F(A)\cup N_F(B)\setminus(A\cup B \cup \{u_1, u_2\})$.
Let $M$ be the set of edges between $u_1$ and $u_2$ in parallel with $e$. 
For a positive integer $i$, let $\Lambda_i$ be the collection of vertices $a\in A\cup B$ such that $|E_F(\{a\},\{u_1,u_2\})|=i$.
We treat three cases:
\begin{enumerate}
\item\label{case1}
$|E_F(A\cup B)|
+(2\Delta-1)|M|
+\sum_{i=2}^{\Delta} (i-1)\Delta  |\Lambda_i| 
 > \eps_1\Delta^2$,
\item\label{case2}
$\sum_{c\in C} |E_F(\{c\},A\cup B)| \cdot (\Delta- |E_F(\{c\},A\cup B)|)>\eps_2\Delta^3$, and
\item\label{case3}
we are neither in Case~\ref{case1} nor Case~\ref{case2}.
\end{enumerate}

\paragraph{Case~\ref{case1}.}

An exercise in double-counting checks that $\deg_{L(F)^2}(e)$ equals
\begin{align}\label{eqn:squaredegree}
2\Delta(\Delta-1) - 
\left(|E_F(A\cup B)|
+(2\Delta-1)|M|
+\sum_{i=2}^{\Delta} (i-1)\Delta  |\Lambda_i|\right)
\end{align}
and so $\deg_{L(F)^2}(e) < (2-\eps_1)\Delta^2$ in this case. Thus, $N_{L(F)^2}(e)$ necessarily induces a subgraph of $L(F)^2$ with fewer than $\deg_{L(F)^2}(e)^2/2$ edges, which implies
\begin{align}\label{eqn:case1}
|E_{L(F)^2}(N_{L(F)^2}(e))| < (2-2\eps_1+{\eps_1}^2/2)\Delta^4.
\end{align}

\paragraph{Case~\ref{case2}.}

For any $e_1 \in N_{L(F)^2}(e)$, note that $|N_{L(F)^2}(e_1)\cap N_{L(F)^2}(e)|$ is at most $2\Delta^2$ minus the number of three-edge walks in $F$ with first edge $e_1$ and last edge not in $N_{L(F)^2}(e)$. Every two-edge path $ae_2ce_3x$ in $F$, where $e_2,e_3\in E$, $a,c,x\in V$, $a\in A\cup B$, $c\in C$, and $x\notin A\cup B$, contributes $\Delta$ such three-edge walks in $F$. So the total number of such three-edge walks exceeds $\eps_2\Delta^4$ from which we conclude by the handshaking lemma that
\begin{align}\label{eqn:case2}
|E_{L(F)^2}(N_{L(F)^2}(e))| < (2-\eps_2/2)\Delta^4.
\end{align}

\paragraph{Case~\ref{case3}.}

We shall bound from above the number of edges of $L(F)^2$ induced by $N_{L(F)^2}(e)$ via a lower bound on the number of closed four-edge walks in $F$ that only use edges between $A\cup B$ and $C$.
For any $e_1 \in E_F(A\cup B,C)$, note that $|N_{L(F)^2}(e_1)\cap N_{L(F)^2}(e)|$ is at most $2\Delta^2$ minus the number of such closed four-edge walks to which it belongs.
It follows that the number of edges of $L(F)^2$ induced by $N_{L(F)^2}(e)$ is at most $2\Delta^4$ minus twice the number of such closed four-edge walks.
For $c_1,c_2\in C$, let $w(c_1,c_2)$ denote the number of two-edge walks between $c_1$ and $c_2$ with middle vertex in $A\cup B$. To be unambiguous about what this means when $c_1=c_2$, let us in this case only count (unordered) pairs of distinct edges that both have as endpoints both $c_1$ and some vertex in $A\cup B$. Note that the number of such closed four-edge walks is at least
\begin{align*}
\sum_{\{c_1,c_2\}\in \binom{C}{2}+C}\binom{w(c_1,c_2)}{2},
\end{align*}
where we have used the unconventional notation $\binom{X}{2}+X$ to denote the collection of all unordered pairs of distinct elements from $X$ together with all pairs $\{x,x\}$ for $x\in X$.

Let $C' = \{c\in C \,|\, |E_F(\{c\},A\cup B)| \ge \eps_3 \Delta\}$.
Using the expression in~\eqref{eqn:squaredegree}, it follows from the fact that we are not in Case~\ref{case1} that
\begin{align*}
|E_F(A\cup B,C)| 
& \ge \deg_{L(F)^2}(e) - |E_F(A\cup B)| - 2\Delta \\
& \ge (2-2\eps_1)\Delta^2-O(\Delta).
\end{align*}
Moreover, we have that
\begin{align*}
|E_F&(A\cup B, C\setminus C')| 
 = ((1-\eps_3)\Delta)^{-1} \sum_{c\in C\setminus C'} |E_F(\{c\},A\cup B)|\cdot (\Delta-\eps_3\Delta) \\
& \le ((1-\eps_3)\Delta)^{-1} \sum_{c\in C\setminus C'} |E_F(\{c\},A\cup B)|\cdot(\Delta-|E_F(\{c\},A\cup B)|) \\
& \le ((1-\eps_3)\Delta)^{-1} \eps_2 \Delta^3 = \frac{\eps_2}{1-\eps_3}\Delta^2,
\end{align*}
where the last inequality holds because we are not in Case~\ref{case2}. 
Thus
\begin{align}
|E_F(A\cup B, C')| 
& \ge \left(2-2\eps_1-\frac{\eps_2}{1-\eps_3}\right)\Delta^2-O(\Delta).
\end{align}

By applying Jensen's Inequality with respect to the convex function $\binom{x}{2}$,
\begin{align*}
\sum_{\{c_1,c_2\}\in \binom{C'}{2}+C'} &w(c_1,c_2)
 = \sum_{a\in A\cup B} \binom{|E_F(\{a\},C')|}{2}\\
 &\ge |A\cup B| \binom{|A\cup B|^{-1}\sum_{a\in A\cup B}|E_F(\{a\},C')|}{2}\\
 &\ge \frac{|E_F(A\cup B, C')|^2}{2|A\cup B|} -O(\Delta^2) \\
 &\ge \left(1-\eps_1-\frac{\eps_2}{2(1-\eps_3)}\right)^2\Delta^3 -O(\Delta^2),
\end{align*}
where we used $|A\cup B| \le 2\Delta$ in the last inequality.
We have that $|C'| \le 2\Delta/\eps_3$, so again by Jensen's Inequality with respect to $\binom{x}{2}$
\begin{align*}
\sum_{\{c_1,c_2\}\in \binom{C}{2}+C}&\binom{w(c_1,c_2)}{2} 
 \ge \sum_{\{c_1,c_2\}\in \binom{C'}{2}+C'}\binom{w(c_1,c_2)}{2}\\
& \ge \left(\binom{|C'|}{2}+|C'|\right)\binom{\left(\binom{|C'|}{2}+|C'|\right)^{-1}\sum_{\{c_1,c_2\}\in \binom{C'}{2}+C'}w(c_1,c_2)}{2} \\
& \ge \frac{{\eps_3}^2}{4} \left(1-\eps_1-\frac{\eps_2}{2(1-\eps_3)}\right)^4 \Delta^4 -O(\Delta^3).
\end{align*}
We conclude in this case that
\begin{align}\label{eqn:case3}
|E_{L(F)^2}(N_{L(F)^2}(e))| \le \left(2-\frac{{\eps_3}^2}{2} \left(1-\eps_1-\frac{\eps_2}{2(1-\eps_3)}\right)^4\right)\Delta^4 +O(\Delta^3).
\end{align}

\medskip
Considering~\eqref{eqn:case1}--\eqref{eqn:case3}, we obtain a bound on $|E_{L(F)^2}(N_{L(F)^2}(e))|$ that is a nontrivial factor smaller than $\binom{2\Delta(\Delta-1)}{2}\sim 2\Delta^4$ in all three cases, provided $\Delta$ is large enough and provided that we can find $\eps_1,\eps_2,\eps_3>0$ such that
\begin{align*}
-2\eps_1+\frac{{\eps_1}^2}{2} < 0 \text{ and }
1-\eps_1-\frac{\eps_2}{2(1-\eps_3)} > 0.
\end{align*}
Exactly the same choices as made by Molloy and Reed, $\eps_1 = 1/30$, $\eps_2 = 1/9$, $\eps_3 = 2/3$, suffice here for $\eps = 1/36$, thus completing the proof.
\end{proof}


\section{Proof of Theorem~\ref{thm:main}}\label{sec:main}

Let $\eps$ be the constant given by Theorem~\ref{thm:linemulti}. Possibly by decreasing this choice of $\eps$, we may assume due to Proposition~\ref{prop:trivial} that the result holds for $\omega(G) < 6$. So it only remains to consider $\omega(G)\ge6$. Since $\eps$ cannot be greater than $3/4$, we know that $\omega(G)^2+\omega(G)+1$ and $\omega(G)^2+(\omega(G)+1)/2$ are at most $\lfloor(2-\eps)\omega(G)^2\rfloor$.

We proceed by induction on the number of vertices in $G$. The base case of $G$ having $6$ vertices is trivially true. So now assume $G$ has more than $6$ vertices and that the result holds for all graphs with fewer vertices than $G$ has. We have three cases to consider in succession.

If $G$ is the line graph of a multigraph, then the result follows from Theorem~\ref{thm:linemulti}.

If $G$ is not the line graph of a multigraph but is a quasi-line graph, then let $v$ be the vertex and $S$ the set given by Theorem~\ref{thm:quasiline}. Since $G\setminus v$ is a claw-free graph and $\omega(G\setminus v) \le \omega(G)$, it follows by induction that there is a proper colouring of $(G\setminus v)^2$ with colours from ${\mathcal K} = \{1,\dots,\lfloor(2-\eps)\omega(G)^2\rfloor\}$. We shall use this colouring to obtain a proper colouring of $G^2$.  To do so, we first uncolour the vertices of $S$ and then recolour them with distinct colours from ${\mathcal K}$ as follows. For each $u\in S$, we need to provide a colour in $\mathcal K$ that is distinct not only from the colours appearing on the vertices of $N_{G^2}(u)\setminus (\{v\}\cup N_G(v)) = N_{(G\setminus v)^2}(u)\setminus N_G(v)$, but also from the $\deg_G(v)-|S|$ colours assigned to $N_G(v)\setminus S$. Since $\deg_{G^2}(u) \le \omega(G)^2+\omega(G)$ and $\{v\}\cup N_G(v)\setminus\{u\} \subseteq N_{G^2}(u)$, the number of colours from $\mathcal K$ potentially available to $u$ in this sense (just after recolouring) is at least $|{\mathcal K}| - (\omega(G)^2+\omega(G))+|S|\ge |S|$. By this fact, it follows that we can greedily recolour $S$ with distinct colours from $\mathcal K$ to obtain a proper colouring of $(G\setminus v)^2$ in which all the vertices in $N_G(v)$ have different colours. Now, since $\deg_{G^2}(v) \le \omega(G)^2+\omega(G)$, there is at least one colour available in $\mathcal K$ different from all the colours in $N_{G^2}(v)$. Giving such a colour to $v$ yields a proper colouring of $G^2$ from $\mathcal K$.

If $G$ is not a quasi-line graph, then let $v$ be the vertex given by Theorem~\ref{thm:clawfree}. Again by induction there is a proper colouring of $(G\setminus v)^2$ with $\lfloor(2-\eps)\omega(G)^2\rfloor$ colours. In this colouring, necessarily all the vertices in $N_G(v)$ have different colours. Since $\deg_{G^2}(v) \le \omega(G)^2+(\omega(G)+1)/2$, there is at least one colour available to $v$, which leads to the desired proper colouring of $G^2$.
\qed


\section{Multigraphs with induced matching number one}\label{sec:diametertwo}

In this section, we outline how to establish Theorem~\ref{thm:diametertwo}. For any graph $G=(V,E)$, if $\omega(G^2)=|V|$, then the minimum square degree of $G$ must be at least $|V|-1$. So by Theorems~\ref{thm:clawfree} and~\ref{thm:quasiline}, it suffices to show Theorem~\ref{thm:diametertwo} in the special case of $G$ being the line graph of a multigraph. This is implied by the following theorem (which is slightly stronger than what we require).
Chung, Gy\'arf\'as, Tuza and Trotter~\cite{CGTT90} proved this for $F$ a simple graph.
We adopt their notation. If $\Delta\ge 2$ is even, then $C_5(\Delta)$ is the graph obtained from the five-cycle by substituting each vertex by a stable set of size $\Delta/2$. If $\Delta\ge 3$ is odd, then $C_5(\Delta)$ is the graph obtained by substituting two consecutive vertices on the five-cycle by stable sets of size $(\Delta+1)/2$ and the remaining three by stable sets of size $(\Delta-1)/2$. Let $f(\Delta)$ denote the number of edges in $C_5(\Delta)$, so it is $5\Delta^2/4$ if $\Delta$ is even and $(5\Delta^2-2\Delta+1)/4$ if $\Delta$ is odd.
\begin{theorem} \label{thm:multiCGTT}
  Let $\Delta \ge 2$ and suppose $F=(V,E)$ is a multigraph with maximum degree
  $\Delta(F)\le\Delta$ and whose underlying simple graph $F_0$ is connected
  and induces no $2K_2$.
\begin{enumerate}
\item\label{it:CGTT:1} If $F$ is bipartite, then $|E| \le \Delta^2$. 
  Equality holds if and only if $F$ is the complete bipartite graph $K_{\Delta,\Delta}$.
\item\label{it:CGTT:2} If $\omega(F)=2$ and $F$ is not bipartite, then $|E| \le f(\Delta)$.
  Equality holds if and only if $F$ is isomorphic to $C_5(\Delta)$. 
\item\label{it:CGTT:3} If $\omega(F)\ge5$, then $|E| < f(\Delta)$. 
\item\label{it:CGTT:4} If $\omega(F)=4$, then $|E| < f(\Delta)$.
\item\label{it:CGTT:5} If $\omega(F)=3$, then $|E| < f(\Delta)$.
\end{enumerate}
\end{theorem}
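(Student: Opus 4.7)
The plan is to adapt the simple-graph proof of Chung, Gy\'arf\'as, Tuza and Trotter~\cite{CGTT90} to the multigraph setting. The first step, common to all five cases, is to determine the structure of the underlying simple graph $F_0$. Since $F_0$ is connected and has no induced $2K_2$, its diameter is at most $3$. In case~\ref{it:CGTT:1}, $F_0$ is a connected bipartite $2K_2$-free graph, hence a chain graph: the neighbourhoods on each side are linearly ordered by inclusion, and connectedness yields on each side a vertex adjacent to the whole opposite side, so both parts have size at most $\Delta$. In case~\ref{it:CGTT:2}, since $F_0$ is triangle-free and non-bipartite it contains an odd cycle of length at least $5$, and $2K_2$-freeness forces a shortest such cycle to have length exactly $5$; then $2K_2$-freeness and triangle-freeness together organise $V(F_0)$ into a $C_5$-blow-up $V_1\cup\dots\cup V_5$ where each bipartite piece between consecutive parts is itself a chain graph. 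In cases~\ref{it:CGTT:3}--\ref{it:CGTT:5}, a maximum clique $K$ of $F_0$ serves as a centre: every vertex of $F_0$ either lies in $K$ or is adjacent to some vertex of $K$, and $2K_2$-freeness forces each vertex outside $K$ to attach to most of $K$.

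The second step is to bound $|E(F)|$ given this structure. The basic inequality is the handshake $2|E(F)|=\sum_v\deg_F(v)\le\Delta|V(F_0)|$, combined with an upper bound on $|V(F_0)|$ derived from the structural description. In case~\ref{it:CGTT:1} this directly gives $|E(F)|\le\Delta^2$. In case~\ref{it:CGTT:2} one uses $|V_{i-1}|+|V_{i+1}|\le\Delta$ (holding because some vertex of $V_i$ is adjacent in $F_0$ to all of $V_{i-1}\cup V_{i+1}$), summed around the cycle to yield $|V(F_0)|\le 5\Delta/2$ and hence $|E(F)|\le f(\Delta)$; the odd-$\Delta$ correction is extracted by a parity refinement using the chain-graph structure on each $(V_i,V_{i+1})$ piece. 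In cases~\ref{it:CGTT:3}--\ref{it:CGTT:5}, the clique $K$ restricts the possible configurations enough to compute $|E(F)|<f(\Delta)$ directly, with additional subcases in~(iv) and~(v) according to how vertices outside $K$ attach.

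For the equality analyses in cases~\ref{it:CGTT:1} and~\ref{it:CGTT:2}, the key idea is a propagation argument: every vertex whose $F_0$-degree equals $\Delta$ has $\Delta$ positive-multiplicity neighbours whose multiplicities sum to $\Delta$, so each such multiplicity equals $1$; this cascades through the graph and pins $F$ down as the simple extremal graph ($K_{\Delta,\Delta}$ or $C_5(\Delta)$). The main obstacle will be in case~\ref{it:CGTT:2}, where one must rule out non-complete $C_5$-blow-ups in which missing simple edges in some bipartite piece are compensated by multi-edges elsewhere; showing that such trades never yield more edges than the complete blow-up $C_5(\Delta)$ requires a careful global argument beyond the local degree bounds. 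A similar but simpler check confirms that in cases~\ref{it:CGTT:3}--\ref{it:CGTT:5} no multi-edge configuration reaches $f(\Delta)$, so those inequalities are strict.
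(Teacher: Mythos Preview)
Your outline for parts~\ref{it:CGTT:1} and~\ref{it:CGTT:2} is broadly on track, though you are making~\ref{it:CGTT:2} harder than it needs to be. Theorem~2 of~\cite{CGTT90} says that a connected, triangle-free, $2K_2$-free, non-bipartite graph is a \emph{complete} $C_5$-blow-up: every vertex of $V_i$ is adjacent to every vertex of $V_{i-1}\cup V_{i+1}$. So the ``non-complete $C_5$-blow-ups'' you worry about simply do not occur in $F_0$, and your imagined obstacle --- missing simple edges compensated by multi-edges --- vanishes. The paper's proof uses this directly: $\deg_{F_0}(v)=a_{i-1}+a_{i+1}$ for every $v\in A_i$, and equality analysis reduces to solving the linear system $a_{i-1}+a_{i+1}=\Delta$ (with one slack for odd $\Delta$). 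Your chain-graph description is correct but too weak, and leads you to chase a phantom difficulty.

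The real gap is in parts~\ref{it:CGTT:3}--\ref{it:CGTT:5}. Your plan is to bound $|V(F_0)|$ via ``each vertex outside $K$ attaches to most of $K$'' and then apply the handshake inequality $2|E(F)|\le\Delta|V(F_0)|$. Both ingredients fail. First, a vertex $y\in Y=V\setminus K$ can be adjacent to a single vertex of $K$ without creating a $2K_2$; what $2K_2$-freeness actually forces is that for each \emph{edge} $y_1y_2\in E(Y)$, the endpoints jointly see all but at most one vertex of $K$ (this is Claim~0 of~\cite{CGTT90}). Second, even granting the best vertex-wise bound on $|Y|$, the handshake estimate gives at best $|E|\le\tfrac{p}{2}\Delta(\Delta-p+1)+O(\Delta)$, which exceeds $f(\Delta)=\tfrac54\Delta^2$ as soon as $\Delta\ge 6$ for every $p\in\{3,4,5\}$. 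The argument that works, and that the paper follows, double-counts $\sum_{e\in E(Y)}w(e)$ where $w(y_1y_2)$ is the number of $F$-edges from $\{y_1,y_2\}$ to $K$: Claim~0 gives $w(e)\ge p-1$, while each $K$--$Y$ edge contributes to at most $\Delta-1$ terms of the sum, yielding $|E(Y)|\le(p\Delta-2|E(K)|)(\Delta-1)/(p-1)$. This is the missing idea; without it the cases $\omega(F)\ge 3$ do not go through.
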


The original proof in~\cite{CGTT90} for simple graphs extends to multigraphs with some minor modifications. For brevity, we have elected to include only an outline for most of these modifications, and refer liberally to~\cite{CGTT90}.

\begin{proof}[Proof of~\ref{it:CGTT:1}]
  Let $A$ and $B$ be the color classes of $F$.
  By the corollary of Theorem~1 in~\cite{CGTT90} applied to $F_0$,
  there is a vertex $v$ adjacent to all vertices of $A$,
  so $|A| \le \Delta$. It follows that $|E| \le |A| \cdot \Delta \le \Delta^2$.

  By the above argument, equality is possible
  only if $|A|=|B|=\Delta$ and $F$ is $\Delta$-regular.
  Let us prove by induction on $\Delta$
  that equality holds only when $F$ is the simple graph $K_{\Delta,\Delta}$.
  This is clear for $\Delta = 1$.
  Assume $\Delta \ge 2$ and $|E(F)| = \Delta^2$.
  The corollary of Theorem~1 in~\cite{CGTT90}
  applied to $F_0$ gives $a \in A$ and $b \in B$ with neighbourhoods
  equal to $B$ and $A$ respectively.
  Since $\deg(a) = \Delta = |B|$ and similarly $\deg(b) = |A|$,
  the vertices $a$ and $b$ have no incident multiedges.
  Consequently, the multigraph $F'=F \setminus \{a,b\}$
  has $(\Delta-1)^2$ edges and no induced $2K_2$ in its underlying simple graph, and
  is bipartite, $(\Delta-1)$-regular and
  connected. The induction hypothesis applied to $F'$
  gives $F'=K_{\Delta-1,\Delta-1}$, and so $F=K_{\Delta,\Delta}$.
\end{proof}

\begin{proof}[Proof of~\ref{it:CGTT:2}]
  By Theorem~2 in~\cite{CGTT90} applied to $F_0$,
  we know that $F_0$ is the blow-up of a $C_5$ by stable sets
  $A_1,\dots,A_5$ of respective sizes $a_1,\dots,a_5$.
  For each $i$, the maximum degree condition on $v \in A_i$ gives
  \begin{equation}\label{eq:caseii}
    a_{i-1} + a_{i+1} = \deg_{F_0}(v) \le \deg_F(v) \le \Delta
  \end{equation}
  where the subscript are taken modulo $5$.
  Summing~\eqref{eq:caseii} over $i\in\{1,\dots,5\}$
  gives
  \begin{equation}\label{eq:caseii:sum}
    2\sum_{i=1}^5a_i \le 5 \Delta.
  \end{equation}
  We distinguish two cases depending on the parity of $\Delta$.  

  If $\Delta$ is even, then $|E| \le \frac{\Delta}{2} \cdot \sum_{i=1}^5a_i \le \frac{5}{4}\Delta^2 = f(\Delta)$.
  Inequality~\eqref{eq:caseii} is an equality
  if and only if every $v \in A_i$ is incident to exactly $\Delta$
  simple edges in $F$ and $\Delta=a_{i-1} + a_{i+1}$.
  Moreover, the only solution of this system is $a_i = \frac{\Delta}{2}$.
  To see this, compute
  $$2a_i = \sum_{j=0}^4(-1)^j (a_{i + 2j} + a_{i + 2j + 2}) =
  \sum_{j=0}^4(-1)^j\Delta = \Delta.$$

  If $\Delta$ is odd, then~\eqref{eq:caseii:sum} improves to
  $2\sum_{i=1}^5a_i \le 5 \Delta - 1$.
  We may even assume that $2\sum_{i=1}^5a_i = 5 \Delta - 1$.
  Indeed, if $2\sum_{i=1}^5a_i \le 5 \Delta - 2$,
  then $|E| \le \frac{\Delta}{4}(5 \Delta - 2) < f(\Delta)$.
  It follows that~\eqref{eq:caseii}
  is an equality for all indices $i$ in $\{1,\dots,5\}$ except one.
  Without loss of generality, we may assume $a_{i - 1} + a_{i + 1} = \Delta$
  for $i \in \{1,2,3,4\}$ and $a_4 + a_1 = \Delta - 1$.
  The only solution to this system is given
  by $a_2=a_3=a_5=(\Delta + 1)/2$ and $a_1 = a_4 = (\Delta - 1)/2$,
  which happens if and only if $F_0 = C_5(\Delta)$.
  Noticing that doubling any edge of $C_5(\Delta)$
  creates a vertex of degree $\Delta + 1$,
  we conclude that $F=F_0=C_5(\Delta)$.
\end{proof}

Modulo the fact that we use $F$ instead of $G$ and $\Delta$ instead of $D$,
we use the same notation and definitions as in~\cite{CGTT90}
with the exception that for $y_1, y_2 \in Y$,
the weight $w(y_1,y_2)$ denotes the number
of \emph{edges} from $y_1$ to $K$
plus the number of \emph{edges} from $y_2$ to $K$ in $F$
(instead of the number of neighbours).
Claim~0 in~\cite{CGTT90} is satisfied by this new definition.

\begin{proof}[Sketch proof of~\ref{it:CGTT:3}]
  As there are at most $p\Delta - 2|E(K)|$ edges from $K$ to $Y$,
  relation~$(*)$ in~\cite{CGTT90} becomes
  \begin{equation}\label{eq:newstar}
    \sum_{e \in Y} w(e) \le (p\Delta - 2|E(K)|)(\Delta-1)
  \end{equation}
  (where multiple edges appear multiple times in the sum).
  Using Claim~0 in~\cite{CGTT90},
  \begin{align}
    |E| & \le p\Delta - |E(K)| + |E(Y)|\nonumber\\
    & \le p\Delta - |E(K)| + \frac{1}{p-1}(\Delta-1)(p\Delta - 2|E(K)|) \label{eq:iii}\\
    & = p\Delta + \frac{p}{p-1}(\Delta-1)\Delta - |E(K)|\left(1 + 2\frac{\Delta-1}{p-1}\right).\nonumber
  \end{align}
  Using that $|E(K)| \ge \binom{p}{2}$,
  \begin{align*}
    |E| < p\Delta + \frac{p}{p-1} \Delta^2 - \frac{p(p-1)}{2} - p(\Delta-1)
    \le \frac{p}{p-1} \Delta^2 - \frac{p(p-3)}{2}.
  \end{align*}
  This finishes the proof for $p \ge 5$.
\end{proof}

\begin{proof}[Sketch proof of~\ref{it:CGTT:4}]
  We proceed as in~\cite{CGTT90}. Using the same trick
  with the set $E_3 = \{e \in E \,|\, w(e) = 3 \}$, \eqref{eq:iii}
  can be improved to
  \begin{align*}
    |E| & \le 4\Delta - |E(K)| + \frac{1}{4}(\Delta-1)(4\Delta - 2|E(K)|) + \frac{1}{4}|E_3|\\
    &= \Delta^2-3\Delta - |E(K)|\left(1 + \frac{\Delta-1}{2}\right) + \frac{1}{4}|E_3|\\
    &\le \Delta^2-3\Delta - 6\left(1 + \frac{\Delta-1}{2}\right)  + \frac{1}{4}|E_3|\\
    &\le \Delta^2 - 3 + \frac{1}{4}|E_3|.
  \end{align*}
  By the same structural arguments, each $e \in E_3$ has an endpoint in $A^1$
  (but the other one can also be in $A^1$) and
  it follows that $|E_3| \le (\Delta-1)(\Delta-2)$.  
\end{proof}

\begin{proof}[Sketch proof of~\ref{it:CGTT:5}]
  The original proof applies just as well to line graphs of multigraphs.
  We only need to check the following bounds:
  \begin{enumerate}\renewcommand{\theenumi}{(\alph{enumi})}
  \item\label{it:1} $|E(Y)| \le |Y|(\Delta-1)/2$;
  \item\label{it:2} the number of edges from $K$ to $Y$
    is at most $3\Delta - 6$; and
  \item\label{it:3} $|E| \le |Y|(\Delta-1)/2 + 3\Delta - 3$.
  \end{enumerate}
  The inequality in~\ref{it:1} follows from the fact that each $v \in Y$ has at least one
  neighbour in $K$. The number of edges from
  $K$ to $Y$ is at most $3\Delta - 2|E(K)|$.
  This first implies~\ref{it:2} as $|E(K)| \ge 3$.
  Second, together with~\ref{it:1} it gives
  $|E| \le |Y|(\Delta-1)/2 + 3\Delta - |E(K)|$, which implies~\ref{it:3} using again that $|E(K)| \ge 3$.

  The multigraph analogues of Claims~1 to~7 in~\cite{CGTT90} can be proved using the above three properties
  as axioms in addition to some structural considerations
  that apply exactly in the same way to multigraphs.
  
  More precisely, we have the following in~\cite{CGTT90}: Claim~1 relies on~\ref{it:3};
  Claim~2 only relies on Claim~1;
  Claim~3 only uses Claims~1 and~2 and~\ref{it:2};
  Claim~4 uses~\ref{it:3};
  Claim~5 uses Claims~2 and~4;
  Claim~6 has a purely structural proof; and
  Claim~7 uses Claims~2,~5 and~6.
  The conclusion only uses these claims.
\end{proof}

\subsection*{Acknowledgement}

We thank Luke Postle for alerting us to a subtlety in our original derivation of Theorem~\ref{thm:main}.

This research was begun during a visit of the first and third authors to Radboud University Nijmegen in June 2016.

This research was supported by a Van Gogh grant, reference 35513NM and by ANR project STINT, reference ANR-13-BS02-0007.

The second author is currently supported by a NWO Vidi Grant, reference 639.032.614.


\bibliographystyle{abbrv}
\bibliography{squarecf}

\begin{thebibliography}{10}

\bibitem{AKS80}
M.~Ajtai, J.~Koml{\'o}s, and E.~Szemer{\'e}di.
\newblock A note on {R}amsey numbers.
\newblock {\em J. Combin. Theory Ser. A}, 29(3):354--360, 1980.

\bibitem{BrJo15+}
H.~{Bruhn} and F.~{Joos}.
\newblock {A stronger bound for the strong chromatic index}.
\newblock {\em ArXiv e-prints}, Apr. 2015.

\bibitem{CaKa16+}
W.~{Cames van Batenburg} and R.~J. {Kang}.
\newblock {Squared chromatic number without claws or large cliques}.
\newblock {\em ArXiv e-prints}, 2016.

\bibitem{ChOv07}
M.~Chudnovsky and A.~Ovetsky.
\newblock Coloring quasi-line graphs.
\newblock {\em J. Graph Theory}, 54(1):41--50, 2007.

\bibitem{ChSe05}
M.~Chudnovsky and P.~Seymour.
\newblock The structure of claw-free graphs.
\newblock In {\em Surveys in combinatorics 2005}, volume 327 of {\em London
  Math. Soc. Lecture Note Ser.}, pages 153--171. Cambridge Univ. Press,
  Cambridge, 2005.

\bibitem{ChSe10}
M.~Chudnovsky and P.~Seymour.
\newblock Claw-free graphs {VI}. {C}olouring.
\newblock {\em J. Combin. Theory Ser. B}, 100(6):560--572, 2010.

\bibitem{ChSe12}
M.~Chudnovsky and P.~Seymour.
\newblock Claw-free graphs. {VII}. {Q}uasi-line graphs.
\newblock {\em J. Combin. Theory Ser. B}, 102(6):1267--1294, 2012.

\bibitem{CGTT90}
F.~R.~K. Chung, A.~Gy{\'a}rf{\'a}s, Z.~Tuza, and W.~T. Trotter.
\newblock The maximum number of edges in {$2K_2$}-free graphs of bounded
  degree.
\newblock {\em Discrete Math.}, 81(2):129--135, 1990.

\bibitem{Edm65}
J.~Edmonds.
\newblock Paths, trees, and flowers.
\newblock {\em Canad. J. Math.}, 17:449--467, 1965.

\bibitem{EOSV08}
F.~Eisenbrand, G.~Oriolo, G.~Stauffer, and P.~Ventura.
\newblock The stable set polytope of quasi-line graphs.
\newblock {\em Combinatorica}, 28(1):45--67, 2008.

\bibitem{Erd88}
P.~Erd{\H{o}}s.
\newblock Problems and results in combinatorial analysis and graph theory.
\newblock In {\em Proceedings of the {F}irst {J}apan {C}onference on {G}raph
  {T}heory and {A}pplications ({H}akone, 1986)}, volume~72, pages 81--92, 1988.

\bibitem{ErSz35}
P.~Erd{\H{o}}s and G.~Szekeres.
\newblock A combinatorial problem in geometry.
\newblock {\em Compos. Math.}, 2:463--470, 1935.

\bibitem{FOGS14}
Y.~Faenza, G.~Oriolo, and G.~Stauffer.
\newblock Solving the weighted stable set problem in claw-free graphs via
  decomposition.
\newblock {\em J. ACM}, 61(4):Art. 20, 41, 2014.

\bibitem{Gup66}
R.~Gupta.
\newblock The chromatic index and the degree of a graph.
\newblock {\em Notices Amer. Math. Soc.}, 13:719, 1966.

\bibitem{Kie89}
H.~A. Kierstead.
\newblock Applications of edge coloring of multigraphs to vertex coloring of
  graphs.
\newblock {\em Discrete Math.}, 74(1-2):117--124, 1989.
\newblock Graph colouring and variations.

\bibitem{Kim95}
J.~H. Kim.
\newblock The {R}amsey number {$R(3,t)$} has order of magnitude {$t^2/\log t$}.
\newblock {\em Random Structures Algorithms}, 7(3):173--207, 1995.

\bibitem{KiRe13}
A.~D. King and B.~Reed.
\newblock Asymptotics of the chromatic number for quasi-line graphs.
\newblock {\em J. Graph Theory}, 73(3):327--341, 2013.

\bibitem{KiRe15}
A.~D. King and B.~A. Reed.
\newblock Claw-free graphs, skeletal graphs, and a stronger conjecture on
  {$\omega$}, {$\Delta$}, and {$\chi$}.
\newblock {\em J. Graph Theory}, 78(3):157--194, 2015.

\bibitem{Min80}
G.~J. Minty.
\newblock On maximal independent sets of vertices in claw-free graphs.
\newblock {\em J. Combin. Theory Ser. B}, 28(3):284--304, 1980.

\bibitem{MoRe97}
M.~Molloy and B.~Reed.
\newblock A bound on the strong chromatic index of a graph.
\newblock {\em J. Combin. Theory Ser. B}, 69(2):103--109, 1997.

\bibitem{NaTa01}
D.~Nakamura and A.~Tamura.
\newblock A revision of {M}inty's algorithm for finding a maximum weight stable
  set of a claw-free graph.
\newblock {\em J. Oper. Res. Soc. Japan}, 44(2):194--204, 2001.

\bibitem{Sbi80}
N.~Sbihi.
\newblock Algorithme de recherche d'un stable de cardinalit\'e maximum dans un
  graphe sans \'etoile.
\newblock {\em Discrete Math.}, 29(1):53--76, 1980.

\bibitem{Sum81}
D.~P. Sumner.
\newblock Subtrees of a graph and the chromatic number.
\newblock In {\em The theory and applications of graphs ({K}alamazoo, {M}ich.,
  1980)}, pages 557--576. Wiley, New York, 1981.

\bibitem{Viz64}
V.~G. Vizing.
\newblock On an estimate of the chromatic class of a {$p$}-graph (in
  {R}ussian).
\newblock {\em Diskret. Analiz}, 3:25--30, 1964.

\end{thebibliography}

\bigskip

\small
\bigskip
\noindent
{Universit\'e Grenoble Alpes, CNRS, Grenoble INP, Laboratoire G-SCOP, Grenoble, France.
Email: \url{{remi.deverclos,lucas.pastor}@g-scop.grenoble-inp.fr}.}

\bigskip
\noindent
{Radboud University Nijmegen, Netherlands.
Email: \url{ross.kang@gmail.com}.}
%

\end{document}